\titleformat{\section}{\normalfont\scshape\centering}{\thesection}{1em}{}
\titleformat{\subsection}{\bfseries}{\thesubsection}{1em}{}
\theoremstyle{plain}
\newtheorem{theorem}{Theorem}[section]
\newtheorem*{main}{Main Theorem}
\newtheorem*{prob}{Index Map Problem}
\newtheorem{corollary}[theorem]{Corollary}
\newtheorem{lemma}[theorem]{Lemma}
\newtheorem{proposition}[theorem]{Proposition}
\theoremstyle{definition}
\newtheorem{remark}[theorem]{Remark}
\newtheorem{example}[theorem]{Example}
\newcommand{\Ind}{\textup{Ind}}
\newcommand{\Gal}{\textup{Gal}}
\newcommand{\lcm}{\textup{lcm}}
\newcommand{\im}{\textup{Im }}
\newcommand{\rank}{\textup{rank }}
\newcommand{\p}{\mathfrak{p}}
\newcommand{\mat}{\textup{Mat}}
\DeclareMathOperator{\tors}{tors}
\title{Unified treatment of Artin-type problems
}
\author{Olli J\"arviniemi and Antonella Perucca}
\address[]{Department of Mathematics and Statistics, University of Turku, 20014 Turku, Finland}
\email{olli.a.jarviniemi@utu.fi}
\address[]{Department of Mathematics, University of Luxembourg, 6 av.\@ de la Fonte, 4364 Esch-sur-Alzette, Luxembourg}
\email{antonella.perucca@uni.lu}
\keywords{Artin's primitive root conjecture, Kummer theory, multiplicative order, Galois theory, Chebotarev density theorem}
\subjclass{Primary: 11R45; Secondary: 11R20, 12F10}
\begin{document}

\maketitle

\begin{abstract}
Since Hooley's seminal 1967 resolution of Artin's primitive root conjecture under the Generalized Riemann Hypothesis, numerous variations of the conjecture have been considered. We present a framework generalizing and unifying many previously considered variants, and prove results in this full generality (under GRH). 
\end{abstract}

\section{Introduction}
Several problems related to Artin's primitive root conjecture may be viewed as instances of the following:

\begin{prob}\label{BIGproblem}
Let $K$ be a number field. Let $W_1, \ldots , W_n$ be finitely generated subgroups of $K^{\times}$ of positive rank. For all but finitely many primes $\mathfrak{p}$ of $K$, their reductions modulo $\mathfrak p$ are subgroups of $k_\p^\times$ of finite index ($k_\p$ being the residue field), yielding the \emph{index map}
$$\mathfrak{p}\mapsto (\Ind_{\mathfrak{p}}(W_1), \ldots , \Ind_{\mathfrak{p}}(W_n))\,.$$
Determine whether the preimage of a subset of $\mathbb{Z}_{>0}^n$ is infinite (respectively, it has a positive natural density). Possibly consider a finite Galois extension $F/K$, a union of conjugacy classes $C \subset \Gal(F/K)$, and restrict the index map to the primes $\p$ unramified in $F$ such that $\left(\frac{F/K}{\mathfrak{p}}\right) \subset C$.
\end{prob}

Consider the preimages of a tuple $(h_1, \ldots , h_n) \in \mathbb{Z}_{>0}^n$. The original Artin's conjecture (possibly for higher rank) corresponds to the case $n = 1$ and $h_1=1$ because we look for a \emph{primitive root}, and for a \emph{near-primitive root} we can vary $h_1\in \mathbb{Z}_{>0}$. For \emph{simultaneous primitive roots} we may consider the preimage of the tuple $(1, \ldots , 1)$, and more generally the \emph{Schinzel-Wójcik problem} concerns the constant tuples. For the \emph{two variable Artin conjecture} we consider $n=2$ and all pairs $(h_1,h_2)$ such that $h_1\mid h_2$. For the \emph{smallest primitive root problem} we let  $W_1,\ldots, W_n$ be generated by the first $n$ integers greater than one and consider the tuples such that $h_n$ is the only entry equal to $1$. Notice that, over $\mathbb Q$, for all above-mentioned problems we could restrict to primes satisfying some congruence condition, which can be expressed by a suitable choice of $F$ and $C$. Also notice that, in case we do not need the Frobenius condition, we may simply take $F=K$. The above list of questions is surely non-exhaustive: we refer the reader to the survey by Moree on Artin's Conjecture \cite{m-survey} and, to name a few, to the following papers \cite{hooley, heath-brown, pappalardi, matthews, wojcik, ms, mss}.

The main results of this work address the Index Map Problem in full generality.

\begin{main}
Assume GRH. Fix a finite Galois extension $F/K$ and a union of conjugacy classes $C \subset \Gal(F/K)$. Restrict the index map to the primes $\mathfrak{p}$ of $K$ unramified in $F$ such that $\left(\frac{F/K}{\mathfrak p}\right) \subset C$.
\begin{enumerate}
\item[(i)] [Theorem~\ref{MasterTheorem} and Remark~\ref{masterremark}] The preimage under the index map of any non-empty subset of $\mathbb{Z}_{>0}^n$ is either finite or it has a positive natural density.
\item[(ii)] [Theorems \ref{wishlist}] The image of the index map is computable with an explicit finite procedure.

\item[(iii)] [Theorem~\ref{thm-imf-sep}] If $F=K$, then the following two conditions are equivalent: 
\begin{itemize}
\item[-] the image of the index map contains all positive multiples of some tuple $(H_1,\ldots, H_n)$;
\item[-] for every $i=1,\ldots, n$ the rank of $\langle W_1,\ldots, W_n\rangle$ is strictly larger than the rank of $\langle W_1,\ldots, \cancel{W_i}, \ldots, W_n\rangle$.
\end{itemize}
Moreover, if the above conditions hold, then a tuple $(h_1,\ldots, h_n)\in \mathbb{Z}_{>0}^n$ is in the image of the index map if and only if the same holds for the tuple $$(\gcd(h_1, H_1),\ldots, \gcd(h_n, H_n))\,.$$
\end{enumerate}
\end{main}

Let $F=K$. In Theorem~\ref{image-f-Q} we explicitly determine the image of the index map when $K = \mathbb{Q}, n = 1$ and $W_1$ has rank $1$. While for $K = \mathbb{Q}$ the index map is never surjective onto $\mathbb{Z}_{>0}^n$ if $n \geqslant 2$, we prove that for any $K\neq \mathbb Q$ and for any $n$ there are groups $W_1,\ldots, W_n$ for which the index map is surjective onto $\mathbb{Z}_{>0}^n$, see Section~\ref{surjective}. Notice that in the Index Map Problem the assumption that the groups have positive rank is justified by Remarks~\ref{remarkNEW1} and \ref{remarkROOTS}.

We make use of several results of Kummer theory, which we collect in Section~\ref{Kummer} (some of our statements may be new). We also prove the following result of Kummer theory, that holds for every number field $K\neq \mathbb Q$: there are countably many elements of $K^\times$ such that any finite subset of them gives rise to Kummer extensions with maximal degree (in short, for  $\alpha_1,\ldots, \alpha_r\in K^\times$ , the degree of $K(\zeta_{mn}, \alpha_1^{1/n}, \ldots, \alpha_r^{1/n})$ over $K(\zeta_{mn})$ equals $n^r$ for all positive integers $n$ and $m$), see Proposition~\ref{infinitelymany}.

Our main technical tool is Theorem~\ref{MasterTheorem}, which is a generalization of a celebrated result by Lenstra \cite{lenstra77}, see also \cite[Section 4]{jarviniemi} (notice that the analytic ideas remain unchanged from Hooley \cite{hooley}). Since in the proof of Theorem~\ref{MasterTheorem} we use an effective version of the Chebotarev density theorem which is conditional on GRH, most of our results on the Index Map Problem are conditional on GRH, as it is customary for problems related to Artin's primitive root conjecture.

\section{Notations for the paper}\label{Notation}

{\bf General notation.} In this paper $K$ is a number field, $F/K$ a finite Galois extension of $K$ with Galois group $\Gal(F/K)$, and $C \subset \Gal(F/K)$ is a union of conjugacy classes. 
The multiplicative group of $K$ is denoted by $K^{\times}$, the ring of integers by $O_K$, and we fix some algebraic closure $\bar{K}$ containing $F$. We call $\tau$ the order of the torsion subgroup of $K^\times$. If $n$ is a positive integer, then we write $\zeta_n$ for a root of unity of order $n$ (such that for all positive integers $m$ we have $\zeta_{mn}^m=\zeta_n$). We also use the notation $K(\zeta_{\infty})$ and $K(\zeta_{\ell^\infty})$, in the obvious way, to define infinite cyclotomic extensions of $K$. We call $\Omega$ the smallest positive integer such that $K\cap \mathbb Q(\zeta_\infty)\subset \mathbb Q(\zeta_\Omega)$.
The letters $p, q, \ell$ are reserved for primes (of $\mathbb{Z}$), and $\mathfrak{p}, \mathfrak{q}$ for primes of $K$ i.e.\ non-zero prime ideals of $O_K$. The ideal norm of $\mathfrak{p}$ is $N\mathfrak{p}$, the residue field is $k_\mathfrak p$, and the Artin symbol is the conjugacy class  $\left(\frac{F/K}{\mathfrak{p}}\right)$.

{\bf Tuples.} We write $I=\{1,\ldots, n\}$ and hence we denote e.g.\ by $h_I$ the tuple $(h_1,\ldots, h_n)$. 
If $h_I$ and $H_I$ are in $\mathbb Z_{\geqslant 0}^n$, then $h_I$ is smaller than or equal to $H_I$ if $h_i\leqslant H_i$ holds for every $i\in I$, and we define $\min(h_I, H_I)$ as the tuple with entries $\min(h_i, H_i)$. Moreover, we call $\oplus h_I$ the set consisting of the tuples larger than or equal to $h_I$.
If $h_I$ and $H_I$ are in $\mathbb Z_{> 0}^n$, then $h_I$ divides $H_I$ if $h_i\mid H_i$ holds for every $i\in I$, and $\gcd(h_I, H_I)$ is the tuple with entries $\gcd(h_i, H_i)$.
Moreover, we call $\otimes h_I$ the set consisting of the multiples of $h_I$, and the $\ell$-adic valuation $v_\ell(h_I)$ is the tuple with entries $v_\ell(h_i)$.

{\bf Groups.}
The symbol $W$ stands for a finitely generated subgroup of $K^\times$ of positive rank (by which we mean the rank of $W$ modulo its torsion subgroup $W_{\tors}$). For all but finitely many $\mathfrak{p}$ the reduction $(W \bmod \mathfrak{p})$ is a well-defined subgroup of $k_\mathfrak p^\times$ and we consider its index $\Ind_{\mathfrak{p}}W$, which we also write as $\Ind_{\mathfrak{p}}\alpha$ if $W=\langle \alpha \rangle$.
If $n\in \mathbb Z_{>0}$, then $W^{1/n}\subset \bar{K}^\times$ consists of all $n$-th roots of all elements of $W$ (we also consider $S^{1/n}$ for $S\subset K^\times$), and we similarly define $W^{1/\infty}$ and $W^{1/{\ell^\infty}}$.

Let $I=\{1,\ldots, n\}$ and for $i\in I$ consider subgroups $W_i$ of $K^\times$ of positive rank. Call $W:=\langle W_i \mid i\in I\rangle$ and $W_{\neq i}:=\langle W_j \mid j\in I, j\neq i \rangle$.

{\bf Separated groups.} We call $W_1,\ldots, W_n$ \emph{separated} if for all $i\in I$ we have 
$$\rank W > \rank W_{\neq i}\,.$$

{\bf The index map.}
For all but finitely many primes $\mathfrak p$ of $K$ we can define the \emph{index map}
$$\Psi: \mathfrak{p}\mapsto (\Ind_{\mathfrak{p}}(W_1), \ldots , \Ind_{\mathfrak{p}}(W_n))\in \mathbb{Z}_{>0}^n \,.$$
Its $\ell$-adic valuation $\Psi_\ell$ is the composition of $\Psi$
with the $\ell$-adic valuation $\mathbb{Z}_{>0}^n\rightarrow \mathbb{Z}_{\geqslant 0}^n$.
We similarly define its $Q$-adic valuation $\Psi_Q$ for any positive squarefree integer $Q$, where the $Q$-adic valuation $v_Q(z)$ of $z\in \mathbb Z_{\neq 0}$ is the tuple consisting of the $\ell$-adic valuation $v_\ell(z)$ for the primes $\ell\mid Q$.

{\bf Convention.} Starting from Theorem~\ref{MasterTheorem}, up to excluding finitely many primes of $K$, we restrict $\Psi$ to the primes that are not ramified over $\mathbb Q$, do not ramify in $F$, and do not lie over a prime divisor of $\tau$.

\section{Results from Kummer theory}\label{Kummer}

\begin{proposition}\label{justW}
The following holds:
\begin{enumerate}
\item[(i)] There exists a positive integer $z$ (depending on $K$ and $W$, computable with an explicit finite procedure) such that for all positive integers $t$ we have
$$[K(\zeta_\infty, W^{1/zt}) : K(\zeta_\infty, W^{1/z})]=t^{\rank W}\,.$$
\item[(ii)] There exists a finite Galois extension $K_W/K$  (depending on $K$ and $W$, computable with an explicit finite procedure) such that for all coprime positive integers $m, t$ we have 
$$K\left(\zeta_{m}, W^{1/m}\right) \cap K\left(\zeta_{t}, W^{1/t}\right) \subset K_W\,.$$
\item[(iii)] There exists a positive integer $c$  (depending on $K$ and $W$, computable with an explicit finite procedure) such that for all positive integers $m,t$ such that $m$ is coprime to both $t$ and $c$ we have
$$ K\left(\zeta_m, W^{1/m}\right)\cap K\left(\zeta_{t}, W^{1/t}\right)=K$$
$$ \text{and}\quad [K\left(\zeta_m, W^{1/m}\right) : K]=\varphi(m)m^{\rank W}\,.$$
We could additionally require that $K\left(\zeta_m, W^{1/m}\right)\cap F=K$ (in this case $c$ also depends on $F$). We could additionally require that $c$ is divisible by some given positive integer, or we may replace $c$ by its squarefree part.
\end{enumerate}
\end{proposition}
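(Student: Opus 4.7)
I propose to prove the three parts in the given order, each building on the previous. For (i), fix a basis $\alpha_1, \dots, \alpha_r$ of $W$ modulo its torsion subgroup, with $r = \rank W$. Kummer theory provides an injection of $\Gal(K(\zeta_\infty, W^{1/n})/K(\zeta_\infty))$ into $(\Z/n\Z)^r$, the obstruction (cokernel) being the subgroup of $W/W^n$ consisting of elements that are $n$-th powers in $K(\zeta_\infty)^\times$. A classical finiteness result (a multiplicative analogue of Bashmakov's theorem) says that this ``Kummer defect'' has order bounded uniformly in $n$. Take $z$ to be an integer annihilating the entire stabilized defect; then for every $t$ the relative extension $K(\zeta_\infty, W^{1/zt})/K(\zeta_\infty, W^{1/z})$ picks up the missing factor $t^r$. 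Computability of $z$ reduces to effective bounds on when rational powers of elements of $W$ become cyclotomic.

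For part (ii), writing $L_n := K(\zeta_n, W^{1/n})$ for brevity, I first show $L_m \cap L_t \subseteq K(\zeta_\infty)$ when $\gcd(m,t) = 1$: the Galois groups of $K(\zeta_\infty, W^{1/m})$ and $K(\zeta_\infty, W^{1/t})$ over $K(\zeta_\infty)$ have exponents dividing $m$ and $t$ respectively, so any common subquotient has exponent dividing $\gcd(m,t) = 1$. Next, by (i), every $\beta \in W^{1/m} \cap K(\zeta_\infty)$ differs from an element of $W^{1/z}$ by an element of $W \cdot \mu_\infty$; hence $L_m \cap K(\zeta_\infty) \subseteq K(\zeta_m) \cdot E$, where $E := K(W^{1/z} \cap K(\zeta_\infty))$ is a \emph{fixed} finite extension of $K$. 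Intersecting with the analogous bound for $L_t$ and using that $K(\zeta_m) \cap K(\zeta_t)$ sits in a fixed cyclotomic extension of $K$ for coprime $m, t$, one obtains $L_m \cap L_t \subseteq K_W$ for a fixed finite $K_W/K$.

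For part (iii), I would take $c$ divisible by $z$, by $\Omega$, by the rational primes ramifying in $K_W/K$ (and in $F/K$, for the optional $F$-condition), and by further primes chosen to force $L_m \cap K_W = K$ (respectively $L_m \cap F = K$) whenever $m$ is coprime to $c$; the latter is a finite combinatorial check over the finitely many subfields of $K_W$ (respectively $F$). Under these conditions, (i) gives $[L_m : K] = \varphi(m) m^r$, and (ii) combined with $L_m \cap K_W = K$ forces $L_m \cap L_t \subseteq L_m \cap K_W = K$. Since all conditions on $c$ depend only on its set of prime divisors, $c$ may be freely enlarged by any prescribed integer or replaced by its squarefree part.

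The main obstacle I foresee is the uniform boundedness in the second step of (ii): showing that $L_m \cap K(\zeta_\infty)$ sits inside $K(\zeta_m) \cdot E$ for a single finite extension $E$ independent of $m$. This is the deepest Kummer-theoretic ingredient, equivalent to finiteness of the group of ``Kummer-cyclotomic defects''---elements of $W$ acquiring cyclotomic roots beyond what is already visible in $W^{1/z}$. Once this is established, the degree and disjointness computations in (iii) are essentially formal, requiring only careful bookkeeping of ramification.
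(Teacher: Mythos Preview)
Your approach is correct but takes a genuinely different route from the paper's. The paper's proof is only a few lines: it invokes a result of Perucca--Sgobba--Tronto characterizing $z$ as any integer maximizing the failure ratio $\varphi(z)z^{\rank W}/[K(\zeta_z,W^{1/z}):K]$, a condition which already controls degrees over $K$ itself (not merely over $K(\zeta_\infty)$); one then simply sets $K_W=K(\zeta_z,W^{1/z})$ and $c=z$, and (ii)--(iii) drop out of a single linear-disjointness computation with $L_z$. Your route proves (i) only over $K(\zeta_\infty)$ and then descends for (ii) in two steps; this is more self-contained and even yields the bonus $K_W\subset K(\zeta_\infty)$, but two places in your sketch need care. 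The containment $L_m\cap K(\zeta_\infty)\subset K(\zeta_m)\cdot E$ requires first the Kummer-subfield fact that $L_m\cap K(\zeta_\infty)$ is generated over $K(\zeta_m)$ precisely by $W^{1/m}\cap K(\zeta_\infty)$, and second that the root of unity $\zeta$ in your decomposition $\beta=\gamma\zeta$ has order dividing $mz\tau$ rather than being arbitrary (otherwise $E$ would have to absorb unbounded roots of unity, and your bound collapses). In (iii), forcing $L_m\cap K_W=K$ does not follow from ramification alone (there can be unramified pieces); since your $K_W$ sits in some fixed $K(\zeta_N)$, the clean argument is that $[(L_m\cap K(\zeta_N))\cdot K(\zeta_m):K(\zeta_m)]$ divides $\gcd(m^{\rank W},\varphi(N))=1$ once $c$ is also taken divisible by $\varphi(N)$. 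The paper sidesteps both issues by working with degrees over $K$ from the outset.
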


\begin{proof}
For (i) we may suppose that $W$ is torsion-free. By \cite[Theorems 1.1 and 1.2]{perucca-sgobba-tronto} we can take $z$ to be a positive integer maximizing the integer ratio
$$\frac{\varphi(z) z^{\rank W}}{[K\left(\zeta_z, W^{1/z}\right) : K]}\,.$$
We may also suppose that $K\cap \mathbb Q(\zeta_\infty)\subset K(\zeta_z)$, so that for (ii) and (iii) we may take $K_W=K\left(\zeta_z, W^{1/z}\right)$ and $c=z$. For the additional remarks in (iii) notice that, since we may replace $c$ by a multiple, we may suppose $\Omega\mid c$ and $F\cap K\left(\zeta_\infty, W^{1/\infty}\right)\subset K\left(\zeta_c, W^{1/c}\right)$.
\end{proof}

In the following result we introduce the terminology of \emph{separated} (respectively, \emph{$\ell$-separated}) \emph{Kummer extensions}.

\begin{proposition}\label{used}
Suppose that $W_1,\ldots, W_n$ are separated.
\begin{enumerate}
\item[(i)] There exists $h_I\in \mathbb Z_{>0}^n$ such that the $W_i$'s have \emph{separated Kummer extensions} w.r.t.\ $h_I$. By this we mean that the fields $$K(\zeta_\infty, W_1^{1/H_1}, \ldots, W_n^{1/H_n}) \qquad \text{for}\qquad H_I\in \otimes h_I$$ are pairwise distinct. Equivalently, by varying $i \in I$ and $H_i\in h_i \mathbb Z\cap \mathbb Z_{>0}$, the fields $K(\zeta_\infty, W_i^{1/H_i}, W_{\neq i}^{1/\infty})$ are pairwise distinct.
\item[(ii)] For every $\ell$ there exists $e_I\in \mathbb Z_{\geqslant 0}^n$ such that the $W_i$'s have \emph{$\ell$-separated Kummer extensions} w.r.t.\ $e_I$. By this we mean that the fields
$$K(\zeta_{\ell^\infty}, W_1^{1/\ell^{E_1}}, \ldots, W_n^{1/\ell^{E_n}}) \qquad \text{for}\qquad E_I\in \oplus e_I$$
 are pairwise distinct. Equivalently, by varying $i \in I$ and $E_i\in \mathbb Z_{\geqslant e_i}$, the fields $K(\zeta_{\ell^\infty}, W_i^{1/\ell^{E_i}}, W_{\neq i}^{1/\ell^{\infty}})$ are pairwise distinct.
\item[(iii)] Consider the set $Sep\subset \mathbb Z_{>0}^n$  (respectively, $Sep_\ell \subseteq \mathbb Z_{\geqslant 0}^n$) consisting of the tuples with respect to which the $W_i$'s  have separated (respectively, $\ell$-separated) Kummer extensions. The set $Sep$ and all sets $Sep_\ell$ are computable at once with an explicit finite procedure.
There exist $h_I\in \mathbb Z_{>0}^n$ and $e_{\ell,I}\in \mathbb Z_{\geqslant 0}^n$ such that $Sep=\otimes h_I$, $Sep_\ell=\oplus e_{\ell,I}$, and for every $i\in I$ we have $0\leqslant v_\ell(h_i)- e_{\ell, i} \leqslant v_\ell (\tau)$ and 
\begin{align*}
v_\ell(h_i)&= \max \{ z\,: \; K(\zeta_{\infty}, W_{i}^{1/\ell^{z}})\subset K(\zeta_{\infty}, W_{\neq i}^{1/\ell^{\infty}}, W_i^{1/\ell^{z-1}})\}\\ 
e_{\ell, i}&= \max \{ z\,: \; K(\zeta_{\ell^\infty}, W_{i}^{1/\ell^{z}})\subset K(\zeta_{\ell^\infty}, W_{\neq i}^{1/\ell^{\infty}}, W_i^{1/\ell^{z-1}})\}\,.
\end{align*}
\end{enumerate}
\end{proposition}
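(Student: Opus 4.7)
I plan to first prove (ii) in the precise form dictated by the formula in (iii), then assemble (i) and the remaining claims in (iii) from the $\ell$-local data; computability flows throughout from Proposition~\ref{justW}(i). The two reformulations in each of (i) and (ii) (tuple-fields versus single-coordinate families) will be seen to be equivalent \emph{a posteriori}: both reduce to the same injectivity statement about the annihilator stratification of the Kummer modules analyzed below.

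\textbf{Proof of (ii), the main obstacle.} Fix $\ell$ and set $L_i := K(\zeta_{\ell^\infty}, W_{\neq i}^{1/\ell^\infty})$. Applying Proposition~\ref{justW}(i) to both $W$ and $W_{\neq i}$, the Kummer module $M_i := \Gal(L_i(W_i^{1/\ell^\infty})/L_i)$ is a torsion-free $\mathbb Z_\ell$-module of rank $r_i' := \rank W - \rank W_{\neq i}$, which is $\geqslant 1$ by the separation hypothesis. Evaluation on a chosen set of generators of $W_i$ embeds $M_i$ via the Kummer pairing as a $\mathbb Z_\ell$-sub-lattice $N \subset \mathbb Z_\ell^k$ of rank $r_i'$, and the subfield $L_i(W_i^{1/\ell^z})$ is the fixed field of the annihilator of $W_i^{1/\ell^z}$ in $M_i$, which corresponds to $N_z := N \cap \ell^z \mathbb Z_\ell^k$. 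Two facts about the growth factors $g(z) := |N_{z-1}/N_z|$ drive the proof: (a) $g$ is weakly non-decreasing in $z$, since multiplication by $\ell$ induces an injection $N_{z-1}/N_z \hookrightarrow N_z/N_{z+1}$ via torsion-freeness of $\mathbb Z_\ell^k$; and (b) $g(z)$ stabilizes at $\ell^{r_i'} \geqslant \ell$ once $z$ exceeds the Smith normal form exponents of the embedding $N \hookrightarrow \mathbb Z_\ell^k$, a quantity itself controlled by the obstruction output of Proposition~\ref{justW}(i). Together (a) and (b) imply that the max defining $e_{\ell, i}$ is finite and is a genuine threshold: for every $z > e_{\ell, i}$ the growth satisfies $g(z) \geqslant \ell$, so for $E_I \in \oplus e_{\ell, I}$ the single-coordinate fields $L_i(W_i^{1/\ell^{E_i}})$ strictly increase in $E_i$, yielding pairwise distinctness. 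Conversely, any $E_i < e_{\ell, i}$ exhibits a redundant step within the family, failing $\ell$-separation. This proves $Sep_\ell = \oplus e_{\ell, I}$ with the explicit formula, including computability.

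\textbf{Assembly for (i) and the remaining claims in (iii).} The group $\Gal(K(\zeta_\infty, W^{1/\infty})/K(\zeta_\infty))$ decomposes as a product over primes $\ell$, so global distinctness of the tuple-fields is equivalent to $\ell$-local distinctness for every $\ell$. Define $h_i$ so that $v_\ell(h_i)$ is given by the formula; Proposition~\ref{justW}(i) ensures this valuation vanishes outside a finite computable set of $\ell$, making each $h_i$ a positive integer, and $Sep = \otimes h_I$ follows immediately. For the bound $0 \leqslant v_\ell(h_i) - e_{\ell, i} \leqslant v_\ell(\tau)$: passing from $K(\zeta_{\ell^\infty})$ to $K(\zeta_\infty)$ adjoins only prime-to-$\ell$ roots of unity, which interact with $\ell$-power Kummer theory solely through the torsion of $K^\times$; the extra redundancy that can appear between the two versions of the formula is therefore controlled by $v_\ell(\tau)$. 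All relevant data — the tuples $h_I, e_{\ell, I}$, and the finite set of primes $\ell$ for which $e_{\ell, i} > 0$ — are computable simultaneously from a single invocation of Proposition~\ref{justW}.
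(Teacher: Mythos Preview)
Your module-theoretic argument for the existence of the thresholds $e_{\ell,i}$ and $v_\ell(h_i)$ is sound and essentially parallel to the paper's: where you invoke monotonicity of the growth factor $g(z)$ and Smith normal form, the paper simply applies the degree formula from Proposition~\ref{justW}(i) to both $W$ and $W_{\neq i}$ and reads off that the tower grows by a factor $t^{\rank W - \rank W_{\neq i}}$ each time. Both arguments yield the same conclusion, and your treatment of the equivalence between the tuple-field and single-coordinate formulations is adequate (it follows by adjoining $W_{\neq i}^{1/\ell^\infty}$ in one direction and by a finite approximation in the other, which the paper spells out and you leave implicit).

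There are, however, two genuine gaps in part~(iii). First, your justification for the bound $v_\ell(h_i) - e_{\ell,i} \leqslant v_\ell(\tau)$ is not a proof. The passage from $K(\zeta_{\ell^\infty})$ to $K(\zeta_\infty)$ adjoins roots of unity of orders prime to~$\ell$, but the resulting extension has \emph{plenty} of pro-$\ell$ quotients (coming from primes $q\equiv 1\pmod{\ell}$), so a degree-$\ell$ Kummer extension over $K(\zeta_{\ell^\infty})$ can genuinely collapse after this base change. What limits the collapse to $v_\ell(\tau)$ steps is Schinzel's theorem on abelian radical extensions \cite{Schinzel}: in the language of \cite{perucca-sgobba-tronto} this is precisely the bound on the $\ell$-adelic failure, and the paper invokes it explicitly. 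Your sentence ``interact with $\ell$-power Kummer theory solely through the torsion of $K^\times$'' names the right invariant but does not supply the mechanism.

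Second, the computability claim for $Sep$ is incomplete. You correctly observe that Proposition~\ref{justW}(i) bounds the finite set of $\ell$ for which $v_\ell(h_i)>0$, but for each such $\ell$ the defining formula for $v_\ell(h_i)$ involves containments over the infinite extension $K(\zeta_\infty)$, and you give no finite procedure to decide them. The paper handles this by producing an explicit finite integer $T_\ell$ (built from $\tau$ and from primes $q\equiv 1\pmod\ell$ at which some $w\in W$ has valuation not divisible by $\ell$) such that the relevant degree ratio can be computed inside $K(\zeta_{T_\ell\ell^{t_\ell}})$, via \cite[Lemma~C.1.7]{hindry-silverman}. Without such a reduction, ``a single invocation of Proposition~\ref{justW}'' does not suffice.
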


By Proposition~\ref{used}(i), for every $H_I\in \otimes h_I$ the degree $$[K(\zeta_\infty, W_1^{1/H_1}, \ldots, W_n^{1/H_n}) : K(\zeta_\infty, W_1^{1/h_1}, \ldots, W_n^{1/h_n})]$$
is a multiple of $\prod_{i\in I} H_i/h_i$ (stepwise, multiplying one parameter by a prime $\ell$ gives a non-trivial Kummer extension whose degree is a multiple of $\ell$).
Similarly, by Proposition~\ref{used}(ii) for every $E_I\in \oplus e_I$ the $\ell$-adic valuation of the degree 
$$[K(\zeta_{\ell^\infty}, W_1^{1/\ell^{E_1}}, \ldots, W_n^{1/\ell^{E_n}}) : K(\zeta_{\ell^\infty}, W_1^{1/\ell^{e_1}}, \ldots, W_n^{1/\ell^{e_n}})]$$
is at least $\sum_{i\in I} (E_i-e_i)$.

\begin{proof}
To prove the first equivalence (the second is analogous), consider tuples $H_I, H_I'\in \otimes h_I$. If we have $K(\zeta_\infty, W_1^{1/H_1}, \ldots, W_n^{1/H_n})=K(\zeta_\infty, W_1^{1/H'_1}, \ldots, W_n^{1/H'_n})$, then clearly for every $i$ we have $K(\zeta_\infty, W_i^{1/H_i}, W_{\neq i}^{1/\infty})=K(\zeta_\infty, W_i^{1/H'_i}, W_{\neq i}^{1/\infty})$. For the converse: 
if the latter equality holds for some $i$ and for some $H_i, H_i'$, then for $j\neq i$ we can set $H_j=H_j'=Z$ for some (sufficiently divisible) positive integer $Z$ so that the former equality holds.

Now we prove (i), the proof of (ii) being analogous. Let $z$ be as in Proposition~\ref{justW}(i) for $W$, thus for every positive multiple $Z$ of $z$ we have
$$[K(\zeta_\infty, W^{1/Z}): K(\zeta_\infty, W^{1/z})]=(Z/z)^{\rank W}\qquad \text{and}$$
$$[K(\zeta_\infty, W_{\neq i}^{1/Z}): K(\zeta_\infty, W_{\neq i}^{1/z})]=(Z/z)^{\rank W_{\neq i}}\,.$$
By assumption $\rank W>\rank W_{\neq i}$ hence for every distinct positive divisors $t,t'$ of $Z/z$ we have 
 $K(\zeta_\infty, W_i^{1/tz}, W_{\neq i}^{1/Z})\neq K(\zeta_\infty, W_i^{1/t'z}, W_{\neq i}^{1/Z})$ 
Indeed, we have 
$$[K(\zeta_{\infty}, W_i^{1/tz}, W_{\neq i}^{1/\infty}) : K(\zeta_{\infty}, W_i^{1/z}, W_{\neq i}^{1/\infty})] = t^{\rank W - \rank W_{\neq i}}\,.$$
As we can freely choose $Z$, we may replace it by $\infty$.
 
We are left to prove (iii). Notice that $m_I \in Sep$ (respectively, $Sep_\ell$) implies $\otimes m_I\subset Sep$ (respectively, $\oplus m_I\subset Sep_\ell$). Call $v_{\ell, i}$ and $e_{\ell, i}$ the two maxima from (iii), and set $h_i:=\prod_\ell \ell^{v_{\ell, i}}$, thus $Sep \subset \otimes h_I$ and $Sep_\ell \subset \oplus e_{\ell, I}$. These inclusions are equalities because by Kummer theory $h_I\in Sep$ and $e_{\ell, I}\in Sep_\ell$. Clearly $v_\ell(h_i)- e_{\ell, i}$ is non-negative, and it is at most $v_\ell (\tau)$ by  Schinzel's Theorem for abelian radical extensions \cite[Theorem 2]{Schinzel} (with the notation of \cite{perucca-sgobba-tronto}, we have to consider the $\ell$-adelic failure for $W$).

By the theory developed in \cite{debry-perucca, perucca-sgobba} we may compute $e_{\ell, I}$ for all $\ell$, which is the zero tuple except for finitely many  computable primes $\ell$ . We are left to compute $v_\ell(h_I)$ for $\ell\mid \tau$. Set $t_\ell:=v_\ell(\tau)+\max_i(e_{\ell, i})$, and let $T_\ell$ be the product of $2\ell^{v_\ell(\tau)}$ and the primes $q\equiv 1 \pmod \ell$ such that $\ell\nmid v_\mathfrak q (w)$ holds for some $w\in W$ and for some prime $\mathfrak q$ of $K$ lying over $q$. 
We conclude because by \cite[Lemma C.1.7 and its proof]{hindry-silverman} we have 
$$v_\ell(h_i)-e_{\ell, i}= v_\ell \left( \frac{[K(\zeta_{T_\ell \ell^{t_\ell}}, W^{1/\ell^{t_\ell}}):K(\zeta_{T_\ell \ell^{t_\ell}}, W_{\neq i}^{1/{\ell^{t_\ell}}})]}
{[K(\zeta_{\ell^{t_\ell}}, W^{1/\ell^{t_\ell}}):K(\zeta_{\ell^{t_\ell}}, W_{\neq i}^{1/{\ell^{t_\ell}}})]}\right)
\,.$$
\end{proof}

With the notation of Proposition~\ref{used} we could have $e_{\ell, i}< v_\ell(h_i)$: indeed, for $K=\mathbb Q$ and $W=\langle 5 \rangle$ we have $Sep_2=\mathbb Z_{\geqslant 0}$ and $Sep=2\, \mathbb Z_{>0}$.

\begin{lemma}\label{lemma:kummer-galois}
Let $\alpha_1, \ldots , \alpha_r \in K^{\times}$ be multiplicatively independent. There exist a positive integer $N$ and $S \subset (\mathbb{Z}/N\mathbb{Z})^{r+1}$ such that for $(z_0, \ldots , z_r)\in \mathbb{Z}_{>0}^{r+1}$ with $\lcm(z_0, \ldots , z_r)=  z_0$ the following holds: given $(x_0, \ldots , x_r)\in \mathbb{Z}_{>0}^{r+1}$, there is 
$$\sigma\in \Gal\left(F\left(\zeta_{z_0}, \alpha_1^{1/z_1}, \ldots , \alpha_r^{1/z_r}\right)/K\right)$$
such that $\sigma|_F \in C$, $\sigma\left(\zeta_{z_0}\right) = \zeta_{z_0}^{x_0}$, and $\sigma\left(\alpha_i^{1/z_i}\right) = \zeta_{z_i}^{x_i}\alpha_i^{1/z_i}$ for all $i=1,\ldots, r$ if and only if $\gcd(x_0, z_0) = 1$ and $(x_0 \bmod{N}, \ldots , x_r \bmod{N}) \in S$. The integer $N$ only depends on $K,F$ and $\langle \alpha_1, \ldots , \alpha_r\rangle$, and $N,S$ are computable with an explicit finite procedure. Supposing that the prime divisors of $z_0, \ldots, z_r$ belong to some set $D$, we may remove from $N$ all prime factors not in $D$.
\end{lemma}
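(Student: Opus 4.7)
The plan is to isolate an ``entanglement modulus'' $N$ that controls all Kummer-cyclotomic interactions, decompose each $z_i=a_ib_i$ into a wild part with only primes of $N$ and a tame part coprime to $N$, reduce the tame constraints to the gcd condition, and encode the wild constraints in a fixed set $S\subseteq(\mathbb{Z}/N\mathbb{Z})^{r+1}$.

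First, I apply Proposition~\ref{justW}(iii) to $W=\langle\alpha_1,\ldots,\alpha_r\rangle$ with the additional disjointness condition involving $F$, obtaining a computable squarefree $c$. I then enlarge $c$ to a computable $N$ with the same prime support whose $\ell$-adic valuation at each $\ell\mid c$ is large enough that the image of $\Gal(F(\zeta_{\ell^\infty},W^{1/\ell^\infty})/K)$ inside $\Gal(F/K)\times\mathbb{Z}_\ell^\times\times\mathbb{Z}_\ell^r$ is the full preimage of its reduction modulo $\ell^{v_\ell(N)}$. The existence of such a finite depth follows from Schinzel's theorem on abelian radical extensions, as invoked in the proof of Proposition~\ref{used}(iii).

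Given $(z_0,\ldots,z_r)$ with $\lcm(z_I)=z_0$, factor $z_i=a_ib_i$ with $a_i\mid N^\infty$ and $\gcd(b_i,N)=1$, and define
\[
L_w:=F(\zeta_{a_0},\alpha_1^{1/a_1},\ldots,\alpha_r^{1/a_r}),\qquad L_t:=K(\zeta_{b_0},\alpha_1^{1/b_1},\ldots,\alpha_r^{1/b_r}).
\]
The linear disjointness built into $N$ gives $L_w\cap L_t=K$, so $\Gal(L_wL_t/K)=\Gal(L_w/K)\times\Gal(L_t/K)$ and the conditions on $\sigma$ split. On $L_t$ the Galois group is the full product $(\mathbb{Z}/b_0\mathbb{Z})^\times\times\prod_{i=1}^r\mathbb{Z}/b_i\mathbb{Z}$, so the prescribed action on $L_t$ is realizable iff $\gcd(x_0,b_0)=1$, with no constraint on the $x_i$. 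I then define $S\subseteq(\mathbb{Z}/N\mathbb{Z})^{r+1}$ as the set of tuples $(y_0,\ldots,y_r)$ arising from some $\sigma_N\in\Gal(F(\zeta_N,W^{1/N})/K)$ with $\sigma_N|_F\in C$, $\sigma_N(\zeta_N)=\zeta_N^{y_0}$, and $\sigma_N(\alpha_i^{1/N})=\zeta_N^{y_i}\alpha_i^{1/N}$.

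The principal obstacle is to show that the prescribed automorphism on $L_w$ exists iff $\gcd(x_0,a_0)=1$ together with the mod-$N$ membership condition. One lifts to the field $F(\zeta_M,W^{1/M})$ with $M=\lcm(N,a_0)$: by the depth chosen for $N$, the Galois group of this lift is the full pullback of $\Gal(F(\zeta_N,W^{1/N})/K)$ along the natural mod-$N$ surjection, so any mod-$N$ datum in $S$ extends to the full lift by freely choosing the deeper $\ell$-adic digits and then restricts to an automorphism of $L_w$ with the prescribed action; conversely any such automorphism descends to an element of $S$. Combining the tame and wild conditions yields $\gcd(x_0,z_0)=1$ and the asserted membership. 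Computability of $N,S$ is immediate from that of $c$ and from the finiteness of $\Gal(F(\zeta_N,W^{1/N})/K)$; to remove from $N$ the primes not in $D$, note that such primes contribute trivially to every $a_i$ and hence their coordinates in $S$ may be projected away.
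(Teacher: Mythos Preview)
Your proposal follows essentially the same route as the paper. Both arguments rest on Proposition~\ref{justW}: one produces a modulus $N$ for which $F\cap K(\zeta_\infty,W^{1/\infty})\subset K(\zeta_N,W^{1/N})$ and $[K(\zeta_M,W^{1/M}):K(\zeta_N,W^{1/N})]=(M/N)^r\varphi(M)/\varphi(N)$ for every multiple $M$ of $N$, and then observes that an automorphism at level $N$ (this is what $S$ encodes) extends freely to level $M$ and hence restricts as required. Where the paper simply says ``replace $z_i$ by $\gcd(z_i,N)$ and work in $\Gal(K(\zeta_N,W^{1/N})/K)$,'' you make the same reduction explicit through the wild/tame factorisation $z_i=a_ib_i$ and the lift to $F(\zeta_M,W^{1/M})$ with $M=\lcm(N,a_0)$; this is an elaboration of the paper's sketch rather than a different idea.

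One small inefficiency: your detour through Schinzel's theorem (via the proof of Proposition~\ref{used}(iii)) to secure the ``finite depth'' of the Galois image is unnecessary. The maximal-degree statement you need is precisely what Proposition~\ref{justW} already provides, and that is what the paper invokes directly; you can take $N$ to be the integer furnished there (with the extra requirement involving $F$) without any further enlargement.
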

\begin{proof}
The condition $\gcd(x_0, z_0) = 1$ is clearly necessary, so assume that it holds. 
Calling $W=\langle \alpha_1, \ldots , \alpha_r\rangle$, by Proposition \ref{justW} there is some positive integer $N$ such that $F\cap K(\zeta_\infty, W^{1/\infty})\subset K(\zeta_N, W^{1/N})$ and such that for every positive multiple $M$ of $N$ we have 
$$[K(\zeta_M, W^{1/M}): K(\zeta_N, W^{1/N})]=(M/N)^r \cdot \varphi(M)/\varphi(N)\,.$$
Thus we may replace $z_i$ by $\gcd(z_i,N)$ and work in 
$\Gal(K(\zeta_N, W^{1/N})/K)$: if one finds a suitable automorphism of $K(\zeta_N, W^{1/N})$, then one may extend it freely to $K(\zeta_M, W^{1/M})$ as the degree of the extension above is maximal.
\end{proof}

\begin{remark}\label{Debry}
By \cite[Definitions 5 and 10]{debry-perucca}, elements $\alpha_1,\ldots, \alpha_r\in K^\times$ must be strongly $\ell$-independent if for all $i=1,\ldots, r$ there is some prime $\mathfrak q_i$ of $K$ such that $\ell \nmid v_{\mathfrak q_i}(\alpha_i)$ and $\ell \mid v_{\mathfrak q_i}(\alpha_j)$ for all $j\neq i$. Indeed, if $\alpha:=\prod \alpha_{i}^{z_i}$ holds for some integers $z_i$ and w.l.o.g.\ $\ell\nmid z_1$, then $\ell \nmid v_{\mathfrak q_1}(\alpha)$ hence $\alpha$ is strongly $\ell$-indivisible. Similarly, if the $\alpha_i$'s are not units and their norms $N_{K/\mathbb Q}(\alpha_i)$ are pairwise coprime and not of the form $\pm 1$ times an $\ell$th power, then they are an $\ell$-good basis (as in \cite[Theorem 14]{debry-perucca}) for $\langle \alpha_1,\ldots, \alpha_r \rangle$.
\end{remark}

\subsection*{Kummer extensions contained in cyclotomic extensions}

Consider a prime power $\ell^e\mid \tau$ and a cyclic Kummer extension $L/K$ of degree $\ell^e$ with $L\subset K(\zeta_\infty)$. Call $K'$ the largest subfield of $K\cap \mathbb Q(\zeta_\infty)$ of degree a power of $\ell$ (over $\mathbb{Q}$). We then have $L=L'K$ for some unique $K'\subset L'\subset \mathbb Q(\zeta_\infty)$ such that $[L':\mathbb Q]$ is a power of $\ell$ (and hence $[L': K']=\ell^e$).
We have $L' \subset \mathbb Q(\zeta_n)$ for some positive integer $n$ of the form 
\begin{align*}
n = \ell^{T+e}Q\qquad\text{where}\qquad  Q=\prod_{q \equiv 1\!\!\!\!\! \pmod{\ell}} q\qquad\text{and}\qquad \zeta_{\ell^T}\in K(\zeta_Q)\,,
\end{align*}
where $q$ denotes a prime. Indeed, if $L' \subset \mathbb{Q}(\zeta_n)$ and $q$ is a prime divisor of $n$ with $q \not\equiv 0, 1 \pmod{\ell}$, then $L' \subset \mathbb{Q}(\zeta_{n/q})$ and hence we may  replace $n$ by $n/q$; the same holds if $q \equiv 1 \pmod{\ell}$ and $q^2 \mid n$. Notice that the extension $L'/K'$ corresponds to a cyclic quotient of order $\ell^e$ of $G:=\Gal(\mathbb Q(\zeta_n)/K')$.

\begin{proposition}\label{gq}
Replace $G$ by its largest quotient of exponent $\ell^e$ and suppose that $G \simeq \prod_{i} \mathbb Z/\ell^{e_i}\mathbb Z$. The $i$-th factor corresponds to a cyclic Kummer extension $K'(\gamma_i)/K'$ of degree $\ell^{e_i}$ for some $\gamma_i\in \mathbb Q(\zeta_n)$ such that $\gamma_i^{\ell^{e_i}}\in K'$. 
\begin{enumerate}
\item[(i)] We have $L'=K'(\gamma)$ for some $\gamma:=\prod_{i} \gamma_i^{y_i}$, where $0\leqslant y_i < \ell^{e_i}$ and for some $i$ we have $e_i=e$ and $\ell\nmid y_i$.

\item[(ii)] For every $q\mid n$ such that $q\nmid \ell\Omega$ fix an algebraic integer $g_q\in \mathbb Q(\zeta_{\ell^e q})$ such that $g_q^{\ell ^e}\in \mathbb Q(\zeta_{\ell^e})$ and such that 
$$v_\ell ([\mathbb Q(\zeta_{\ell^e}, g_q):\mathbb Q(\zeta_{\ell^e})])=\varepsilon_q\qquad\text{where}\quad  \varepsilon_q:=\min (e, v_\ell(q-1))\,.$$
Notice that $g_q$ depends on $q,\ell^e$ but not on $K$ and $L$.
For every $q\mid n$ we may fix $\gamma_i$ as follows: if $q\nmid \ell\Omega$, then $\gamma_i^{\ell^{e_i}}=g_q^{\ell^{\varepsilon_q}}$, else $\gamma_i$ is an algebraic integer in $\mathbb Q(\zeta_{\ell^e\Omega})$. Then $\ell\nmid v_\mathfrak p (\gamma_i^{\ell^{e_i}})$ holds for a prime $\mathfrak p$ of $K$ only if $\mathfrak p$ lies over $q$ or over a divisor of $\ell\Omega$ respectively, and in the former case there is such $\mathfrak p$.
\end{enumerate}
\end{proposition}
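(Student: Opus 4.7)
The plan is to realise each cyclic factor of $G$ as a cyclic (pure) extension of $K'$ generated by a Gauss sum with explicit $\ell$-power structure, and to express $L'$ as a product of these generators with suitable exponents. Using $\Gal(\mathbb Q(\zeta_n)/\mathbb Q)\cong\prod_{p\mid n}(\mathbb Z/p^{v_p(n)}\mathbb Z)^\times$, the group $G=\Gal(\mathbb Q(\zeta_n)/K')$ decomposes, after passing to its maximal exponent-$\ell^e$ quotient, naturally along the prime divisors of $n$: the factor attached to a prime $q\mid Q$ with $q\nmid\ell\Omega$ has order $\ell^{\varepsilon_q}$ (since $K'\subseteq\mathbb Q(\zeta_\Omega)$ and $\gcd(q,\Omega)=1$ give $K'\cap\mathbb Q(\zeta_q)=\mathbb Q$, preserving the $q$-part of $\Gal(\mathbb Q(\zeta_n)/\mathbb Q)$ in $G$), while the factor attached to $\ell$ lives inside $\mathbb Q(\zeta_{\ell^{T+e}})\subseteq\mathbb Q(\zeta_{\ell^e\Omega})$. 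This realises the decomposition $G\cong\prod_i\mathbb Z/\ell^{e_i}\mathbb Z$ in the statement, with each index $i$ attached to a specific prime and with $e_i=\varepsilon_q$ whenever $i$ is attached to $q\nmid\ell\Omega$.

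For each index $i$ attached to such a prime $q$ I would take
$$g_q:=\sum_{t\bmod q}\chi_q(t)\,\zeta_q^t,$$
where $\chi_q\colon(\mathbb Z/q\mathbb Z)^\times\to\bar{\mathbb Q}^\times$ is a fixed Dirichlet character of exact order $\ell^{\varepsilon_q}$ (depending only on $q$ and $\ell^e$). Classical Gauss-sum theory then yields: $g_q$ is an algebraic integer in $\mathbb Q(\zeta_{\ell^{\varepsilon_q}q})\subseteq\mathbb Q(\zeta_{\ell^eq})$; $g_q^{\ell^{\varepsilon_q}}\in\mathbb Q(\zeta_{\ell^{\varepsilon_q}})\subseteq\mathbb Q(\zeta_{\ell^e})$; the identity $g_q\overline{g_q}=\chi_q(-1)q$ combined with the known prime factorisation of $g_q$ in $\mathbb Q(\zeta_{\ell^{\varepsilon_q}q})$ forces $(g_q^{\ell^{\varepsilon_q}})$ to equal the prime ideal above $q$ in $\mathbb Q(\zeta_{\ell^{\varepsilon_q}})$ up to a unit; and $v_\ell([\mathbb Q(\zeta_{\ell^e},g_q):\mathbb Q(\zeta_{\ell^e})])=\varepsilon_q$ by a character-theoretic degree count. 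Setting $\gamma_i$ to satisfy $\gamma_i^{\ell^{e_i}}=g_q^{\ell^{\varepsilon_q}}$ (since $e_i=\varepsilon_q$, this just means $\gamma_i=g_q$ up to an $\ell^{e_i}$-th root of unity) gives the required generator. For an index $i$ attached to $\ell$ or to a prime dividing $\Omega$, the associated cyclic extension sits inside the fixed field $\mathbb Q(\zeta_{\ell^e\Omega})$, and I pick $\gamma_i$ there by a standard Kummer construction that produces an algebraic integer.

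Part (i) is then pure Galois theory: $L'/K'$ corresponds to a surjection $G\twoheadrightarrow\mathbb Z/\ell^e\mathbb Z$, given in coordinates by $(a_i)\mapsto\sum_i y_ia_i$ with $0\leqslant y_i<\ell^{e_i}$, and the image has order exactly $\ell^e$ if and only if some $i$ satisfies $e_i=e$ and $\ell\nmid y_i$; the fixed field of its kernel is then $K'(\gamma)$ with $\gamma=\prod_i\gamma_i^{y_i}$, as asserted.

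Part (ii) reduces to the valuation statement. The factorisation $(g_q^{\ell^{\varepsilon_q}})=(q)\cdot(\text{unit})$ in $\mathbb Q(\zeta_{\ell^{\varepsilon_q}})$ transfers to $K$ as follows: any prime $\mathfrak p$ of $K$ above $q$ (existing because $q$ is unramified in $K$, thanks to $q\nmid\Omega$) satisfies $v_\mathfrak p(\gamma_i^{\ell^{e_i}})=e(\mathfrak p\mid q)=1$, hence coprime to $\ell$; and for any prime $\mathfrak p$ of $K$ not above $q$, $\gamma_i^{\ell^{e_i}}$ is a unit there, so $v_\mathfrak p(\gamma_i^{\ell^{e_i}})=0$. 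The main obstacle I expect is the Gauss-sum bookkeeping — in particular verifying $g_q^{\ell^{\varepsilon_q}}\in\mathbb Q(\zeta_{\ell^e})$, the degree equality $v_\ell([\mathbb Q(\zeta_{\ell^e},g_q):\mathbb Q(\zeta_{\ell^e})])=\varepsilon_q$, and matching the character $\chi_q$ to the correct cyclic factor of $G$ — but these are classical consequences of the theory of prime-power-conductor Dirichlet characters and their Gauss sums.
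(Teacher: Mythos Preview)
Your argument for (i) is correct and essentially the paper's: you parametrize cyclic quotients of $G$ of order $\ell^e$ directly via homomorphisms $(a_i)\mapsto\sum y_ia_i$, while the paper reaches the same count through Pontryagin duality---these are two phrasings of the same Kummer/character-theoretic bijection.

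For (ii), your Gauss-sum construction of $g_q$ is a natural and more explicit realization than the paper offers (the paper simply posits $g_q$ with the stated properties and, for the valuation claim, cites \cite[Lemma~C.1.7]{hindry-silverman} without further detail). However, your valuation computation contains two genuine errors.

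First, the ideal $(g_q^{\ell^{\varepsilon_q}})$ is \emph{not} a single prime of $\mathbb{Q}(\zeta_{\ell^{\varepsilon_q}})$: since $\ell^{\varepsilon_q}\mid q-1$, the rational prime $q$ splits completely there into $\varphi(\ell^{\varepsilon_q})$ primes, and Stickelberger's theorem distributes the exponents nontrivially among them (e.g.\ for $\ell=3$, $\varepsilon_q=1$ one finds $(g_q^{3})=\mathfrak q^{2}\bar{\mathfrak q}$, not a single prime). So your formula $v_{\mathfrak p}(\gamma_i^{\ell^{e_i}})=1$ already fails at the level of $\mathbb{Q}(\zeta_{\ell^{\varepsilon_q}})$.

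Second, $q\nmid\Omega$ does \emph{not} force $q$ to be unramified in $K$: the integer $\Omega$ only bounds the conductor of $K\cap\mathbb{Q}(\zeta_\infty)$, so non-abelian ramification at $q$ is unconstrained (take $K=\mathbb{Q}(\zeta_3,7^{1/3})$, where $\Omega=3$ yet $7$ ramifies). Hence $e(\mathfrak p\mid q)=1$ is unjustified, and the product $e(\mathfrak p/\mathfrak q)\cdot v_{\mathfrak q}(g_q^{\ell^{\varepsilon_q}})$ could in principle be divisible by $\ell$.

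The paper circumvents both issues by invoking the standard link (this is the Hindry--Silverman reference) between ramification of the Kummer extension $K(\alpha^{1/\ell})/K$ at $\mathfrak p\nmid\ell$ and the condition $\ell\nmid v_{\mathfrak p}(\alpha)$, applied to $\alpha=g_q^{\ell^{\varepsilon_q}}$; the point is then that $K(g_q^{\ell^{\varepsilon_q-1}})/K$ is a nontrivial subextension of $K(\zeta_q)/K$ (nontriviality uses $q\nmid\Omega$ correctly, via $K\cap\mathbb{Q}(\zeta_q)=\mathbb{Q}$). Your Gauss-sum route can be salvaged, but you would need the full Stickelberger factorization and a separate argument handling possible ramification of $q$ in $K$---the shortcut you took does not survive.
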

\begin{proof}
Since the extensions $K'(\gamma_i)/K'$ are linearly disjoint, 
$\prod_{i} \gamma_i^{z_i}\in K'$ implies $\ell^{e_i}\mid z_i$ for every $i$. Let $0\leqslant y_i,Y_i < \ell^{e_i}$ and suppose that for some $i_1,i_2$ we have $e_{i_1}=e_{i_2}=e$ and $\ell\nmid y_{i_1}, Y_{i_2}$. Then $\prod_{i} \gamma_i^{y_i}$ and $\prod_{i} \gamma_i^{Y_i}$ generate the same extension if and only if $Y_i\equiv ty_i \pmod {\ell^{e_i}}$ holds for every $i$ and for some $t$ coprime to $\ell$, in other words if $y_I$ and $Y_I$ generate the same cyclic subgroup of $G$ of order $\ell^e$. 
By Pontryagin duality the number of those subgroups is the same as the number of cyclic quotients of $G$ of order $\ell^e$ and hence (i) follows.
For the first assertion of (ii) it suffices to choose the isomorphism of $G$ respecting the decomposition $\Gal(\mathbb Q(\zeta_n)/K')=\Gal(\mathbb Q(\zeta_{n_1})/K')\times \Gal(\mathbb Q(\zeta_{n_2})/\mathbb Q)$, where $n=n_1n_2$ and $n_2$ consists of all prime factors $q\nmid \ell\Omega$. We then conclude by a result on valuations \cite[Lemma C.1.7 and its proof]{hindry-silverman}.
\end{proof}

\begin{example}
As an aside remark, notice that $L$ may not be contained in the compositum of  $K$,  $\mathbb Q(\zeta_{\ell^\infty})$, and for any prime $q$ subextensions of $\mathbb Q(\zeta_q)/K\cap \mathbb Q(\zeta_q)$ of degree dividing $\ell^e$: consider $\ell^e=2$, $K=\mathbb Q(\sqrt{65})$, and the quartic cyclic field $L=\mathbb Q(\sqrt{65+8\sqrt{65}})\subset \mathbb Q(\zeta_{65})$.
\end{example}

\section{The Master Theorem on Artin type problems}

For $h_I\in \mathbb Z_{>0}^n$, $h:=\lcm(h_1,\ldots, h_n)$, and a positive squarefree integer $Q$, we write  
$$K_Q := K(\zeta_{Qh}, W_1^{1/Qh_1}, \ldots , W_n^{1/Qh_n})$$
and we let $C_Q \subset \Gal\big(FK_Q/K_1\big)$ consist of the automorphisms whose restriction to $F$ lies in $C$, and which, for any choice of $q\mid Q$ and $i\in I$, is not the identity on $K(\zeta_{qh_i}, W_i^{1/qh_i})$.
For every set $\mathcal S$ of primes of $K$ we write $\mathcal S(x):=| \{\mathfrak{p}\in \mathcal S : N\mathfrak{p} \le x\}|$ and 
$$\pi_K(x) := |\{\mathfrak{p} : N\mathfrak{p} \le x\}|= x/\log x + o(x/\log x)\,.$$ 
The density of $\mathcal{S}$ is defined by
$$d(\mathcal{S}) = \lim_{x \to \infty} \frac{\mathcal{S}(x)}{\pi_K(x)},$$
where it is possible that the limit does not exist.

\begin{theorem}\label{MasterTheorem}
Assume GRH.
Fix $h_I\in \mathbb Z_{>0}^n$, and let $S$ be the set of primes $\mathfrak p$ in the domain of $\Psi$ (restricting $\Psi$ to the primes of $K$ that are not ramified over $\mathbb Q$, do not ramify in $F$, and do not lie over a prime divisor of $\tau$) for which
\begin{align*}
\left(\frac{F/K}{\mathfrak{p}}\right) \subset C \quad \text{and} \quad \Psi(\p)=h_I\,.
\end{align*}
The set $S$ has a natural density $d(S)$, which is $0$ if and only if $S=\emptyset$ if and only if $C_Q = \emptyset$ for some positive squarefree integer $Q$.
Writing $Q_t:=\prod_{q\leqslant t} q$, we have
 $$d(S) = \lim_{t \to \infty} d_{Q_t}\,,\quad \text{where}\quad d_{Q_t}:=\frac{|C_{Q_t}|}{[FK_{Q_t} : K]}\,.$$
\end{theorem}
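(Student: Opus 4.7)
The plan is a Hooley--Lenstra argument adapted to the multi-variable Frobenius-restricted setting. The starting point is the standard Kummer translation: for a prime $\mathfrak p$ in the domain of $\Psi$ and a positive integer $m$, one has $m \mid \Ind_\mathfrak p(W_i)$ if and only if $\mathrm{Frob}_\mathfrak p$ acts trivially on $K(\zeta_m, W_i^{1/m})$. Therefore the event $\{\mathrm{Frob}_\mathfrak p|_F \in C,\ \Psi(\mathfrak p)=h_I\}$ is equivalent to $\mathrm{Frob}_\mathfrak p|_F \in C$ together with $\mathrm{Frob}_\mathfrak p$ being trivial on $K_1$ and nontrivial on each $K(\zeta_{qh_i}, W_i^{1/qh_i})$, for every prime $q$ and every $i \in I$. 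For every squarefree $Q$, one thus has $S \subseteq \{\mathfrak p : \mathrm{Frob}_\mathfrak p \in C_Q\}$, a set of density $d_Q$ by the unconditional Chebotarev density theorem. Since $d_{Q_t}$ is non-increasing in $t$, the limit $d_\infty := \lim_t d_{Q_t}$ exists and $\overline d(S) \leqslant d_\infty$.

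\textbf{Lower bound.} For parameters $\xi_1(x) < \xi_2(x)$ to be specified, I would decompose
\[ S(x) \geqslant T_1(x) - T_2(x) - T_3(x), \]
where $T_1(x) := |\{\mathfrak p : N\mathfrak p \leqslant x,\ \mathrm{Frob}_\mathfrak p \in C_{Q(\xi_1)}\}|$ with $Q(\xi_1) := \prod_{q \leqslant \xi_1} q$, and $T_2, T_3$ count those primes of $T_1$ that fail $\Psi(\mathfrak p)=h_I$ because some $qh_i$ with $q \in (\xi_1, \xi_2]$, respectively $q > \xi_2$, divides $\Ind_\mathfrak p(W_i)$. The main term $T_1$ is handled by the GRH-conditional effective Chebotarev density theorem of Lagarias--Odlyzko, its error controlled by the discriminant bound for $FK_{Q(\xi_1)}$ provided $\xi_1$ is a sufficiently small power of $\log x$. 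For $T_2$, one applies the same effective Chebotarev to each individual $K(\zeta_{qh_i}, W_i^{1/qh_i})$, whose degree over $K$ is $\gg q \cdot q^{\rank W_i}$ for $q$ past a fixed threshold by Proposition~\ref{justW}; the main terms sum to $O(\pi_K(x)/\xi_1)$ while the errors are admissible for $\xi_2$ a small power of $x$. For $T_3$, the classical Hooley trick applies: if $qh_i \mid \Ind_\mathfrak p(W_i)$ with $q > \xi_2$, then any fixed generator $\alpha$ of $W_i$ modulo torsion satisfies $\alpha^{(N\mathfrak p-1)/(qh_i)} \equiv 1 \pmod{\mathfrak p}$, and elementary divisor bounds on $\alpha^m - 1$ (possibly combined with a Brun--Titchmarsh step for an intermediate sub-range, as in Hooley's original argument) keep this contribution $o(\pi_K(x))$ for suitable $\xi_2$. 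Letting $x \to \infty$ and then $\xi_1 \to \infty$, this yields $\underline d(S) \geqslant d_\infty$, so $d(S) = d_\infty$.

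\textbf{Vanishing criterion and obstacle.} The implication ``some $C_Q = \emptyset \Rightarrow S = \emptyset$'' is tautological from the Kummer translation. For the converse, I would invoke Propositions~\ref{justW} and \ref{used} to obtain a threshold $t_0$ such that for every prime $q > t_0$ the extension $K(\zeta_{qh}, W_1^{1/qh_1}, \ldots, W_n^{1/qh_n})/K_1$ has full Kummer degree and is linearly disjoint both from $FK_{Q_{t_0}}$ and from the corresponding extensions at other primes $q' > t_0$. For $t > t_0$ the count then factors as
\[ d_{Q_t} = d_{Q_{t_0}} \cdot \prod_{t_0 < q \leqslant t}(1 - \eta_q), \qquad \eta_q = O\!\left(q^{-1-\min_i \rank W_i}\right), \]
and the infinite product converges to a positive value since $\sum_q \eta_q < \infty$. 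Hence $C_{Q_{t_0}} \neq \emptyset$ forces $d_\infty > 0$, whereupon $S$ has positive density by the lower bound. The main technical obstacle is the simultaneous balancing of the three summands: the GRH-error in $T_1$ forces $\xi_1$ small while the sum in $T_2$ wants $\xi_1$ large, and the elementary bound in $T_3$ only works for $\xi_2$ not too small; a secondary delicate point is the uniformity of the Kummer-degree and linear-disjointness statements for all $q > t_0$, which is precisely the content of the structural Propositions~\ref{justW} and \ref{used}.
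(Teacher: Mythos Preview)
Your proposal is correct and follows essentially the same Hooley--Lenstra approach as the paper: an upper bound via Chebotarev on the approximating sets $S_{Q_t}$, a three-range split of the tail (effective Chebotarev under GRH, Brun--Titchmarsh, and the elementary divisor bound on $\prod_k(w_i^k-1)$) for the lower bound, and positivity of the limiting density via the convergent product coming from Proposition~\ref{justW}. The only cosmetic differences are that the paper keeps the threshold $t$ fixed and sends $x\to\infty$ before $t\to\infty$ (rather than letting $\xi_1=\xi_1(x)$ grow), and that your appeal to Proposition~\ref{used} is superfluous here since no separatedness assumption is in force---Proposition~\ref{justW}(iii) alone supplies the required linear disjointness and maximal degree for all large $q$.
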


The property $\Psi(\p)=h_I$ means that $\mathfrak{p}$ splits in $K(\zeta_{h_i}, W_i^{1/h_i})$ and it does not split in $K(\zeta_{qh_i}, W_i^{1/qh_i})$, for any choice of $i\in I$ and of a prime $q$. Requiring the above conditions only for $q\leqslant t$, we obtain a larger set with natural density $d_{Q_t}$. Theorem~\ref{MasterTheorem} says that these larger sets are good approximations of $S$ for $t$ large.

\begin{proof}
For $Q$ a squarefree positive integer, we have $S\subset S_Q$, where 
$$S_Q:=\Big\{\mathfrak p \;: \!\! \quad \left(\frac{FK_Q/K}{\mathfrak{p}}\right) \subset C_Q \Big\}\,.$$
By the Chebotarev density theorem the natural density of $S_Q$ is $d_Q:={|C_{Q}|}/{[FK_{Q} : K]}$. We have $d_Q=0$ if and only if $C_Q=\emptyset$, and these conditions imply $S=\emptyset$.

We claim that for any positive integer $t$ we have
\begin{equation}
\label{eq:1}
S_{Q_t}(x) -S(x) \leqslant O(\pi_K(x)/t) + o(x/\log x)\,,
\end{equation}
where the implied constants depend on $n, K, F, W, h_I$. We deduce that $d(S)$ exists and (as $t\mapsto d_{Q_t}$ is non-increasing) it equals $\inf \{d_{Q_t}\}$.

Let $Q_0$ be the squarefree part of the constant $c$ from Proposition~\ref{justW} (iii), assuming $n! h\mid c$ and the additional condition involving $F$. We suppose  $C_{Q_0} \neq \emptyset$ and prove $\inf \{d_{Q_t}\}>0$. Let $L_q:=K(\zeta_q, W^{1/q})$ and $L_{q,i}:=K(\zeta_q, W_i^{1/q})$. Since $[L_{q,i}:K]\geqslant (q-1)q$ and by linear disjointness, for $t\geqslant Q_0$ we have 
\begin{align*}
d_{Q_t} = d_{Q_0} \prod_{q\leqslant t, q \nmid Q_0} \frac{|\Gal(L_q/K)\setminus \bigcup_{i\in I} \Gal\left(L_q/L_{q,i})\right)|}{|\Gal(L_q/K)|} \geqslant d_{Q_0} 
\prod_{q > n}\left(1 - \frac{n}{q(q-1)}\right).
\end{align*}
The product over primes $q > n$ is strictly positive: It suffices to show that the product over $n < q < y$ is bounded from below independently of $y$. Taking the logarithm of the partial products and using the bound $\log(1 - \alpha) \geqslant -2\alpha$ (valid for $0 \leqslant \alpha \leqslant 1/2$) we have
\begin{align*}
\prod_{n < q < y} \left(1 - \frac{n}{q(q-1)}\right) = \exp\left(\sum_{n < q < y} \log\left(1 - \frac{n}{q(q-1)}\right)\right) \geqslant \exp\left(-\sum_{n < q < y} \frac{2n}{q(q-1)}\right),
\end{align*}
which is bounded from below by $\exp(-2n\zeta(2)) = \exp(-\pi^2 n/3) > 0$. Hence $d_{Q_t}$ is bounded from below by a positive constant independent of $t$, as $d_{Q_0} > 0$.

We are left to prove the claim \eqref{eq:1}. For $t$ large enough, every $q>t$ satisfies $q\nmid h$ and $[L_{q,i}: K] \geqslant (q-1)q$. 
Setting $f_1(x) := \sqrt{x}/(\log x)^{100}$ and $f_2(x) := \sqrt{x}(\log x)^{100}$, consider the intervals 
$$I_1:=(t, f_1(x))\qquad I_2:=[f_1(x), f_2(x)] \qquad I_3:=(f_2(x) , x]$$ 
and the corresponding sets
$$\Gamma_j:=\{\mathfrak{p} : N\mathfrak{p} \leqslant x \text{ and }\mathfrak{p} \text{ splits in } L_{q,i} \text{ for some $i\in I$ and $q\in I_j$}\}\,.$$
Any $\mathfrak p\in S_{Q_t}\setminus S$ splits in some $L_{q,i}$ with $q>t$, and $N\mathfrak{p} \leqslant x$ implies $q\leqslant x$. Thus $S_{Q_t}(x) -S(x) = O\left(\left|\Gamma_1\right| + \left|\Gamma_2\right|+ \left|\Gamma_3\right|\right)$.
We clearly have 
$$\left|\Gamma_j\right| \leqslant \sum_{i\in I} \sum_{q \in I_j} \left|\lbrace \mathfrak{p} : N\mathfrak{p} \le x \text{ and } \mathfrak{p} \text{ splits in } L_{q,i} \rbrace\right|\,.$$
By the effective Chebotarev density theorem under GRH \cite{serre, Lag-Od}  we get 
$$\left|\Gamma_1\right|  \leqslant \sum_{i\in I} \sum_{q \in I_1} \left(\frac{\pi_K(x)}{q(q-1)} + O\left(\sqrt{x}(\log x)^{10}\right)\right) = \\
O\left(\pi_K(x)/t\right) + o(x/\log x)\,.$$
Moreover, considering separately the primes of $K$ of degree greater than $1$ we have $$\left|\Gamma_2\right| \leqslant n \sum_{q \in I_2} \left|\lbrace \mathfrak{p} : N\mathfrak{p} \leqslant x, \mathfrak{p} \text{ is of degree } 1 \text{ and } \mathfrak{p} \text{ splits in } K\left(\zeta_q\right)\rbrace\right| + n[K : \mathbb{Q}]\sqrt{x}$$
and hence by the Brun-Titchmarsh inequality
\begin{align*}
\left|\Gamma_2\right| & \leqslant O\big( \sum_{q \in I_2} \left|\lbrace p  :  p \leqslant x \text{ and } {p}\equiv 1\!\!\!\! \pmod q \rbrace\right|\big)+ n[K : \mathbb{Q}]\sqrt{x}\\
&  \leqslant O\left(
 \sum_{q \in I_2}\frac{2\pi(x)}{q-1}\right)+o(x/\log x)=o(x/\log x)\,,\end{align*}
where the last equality follows e.g.\ by decomposing $I_2$ into dyadic intervals of the form $[y, 2y]$, and bounding the contribution of such intervals by the prime number theorem and the estimate $1/(q-1) = O(1/y)$ for $q \in [y, 2y]$.
 
Fixing some $w_i\in W_i\setminus K^\times_{\tors}$ we have
$$\Gamma_3 \subset  
\bigcup_{i\in I} \{\mathfrak{p} : N\mathfrak{p} \le x \text{ and }\mathfrak{p} \text{ splits in } K\left(\zeta_q, w_i^{1/q}\right) \text{ for some $q\in I_3$}\}.$$
Thus $\p\in \Gamma_3$ implies $w_i^{(N\mathfrak{p} - 1)/q} \equiv 1 \pmod{\mathfrak{p}}$ for some $i\in I$, $q \in I_3$. So we have 
$$\prod_{\p\in \Gamma_3}N\mathfrak{p}\mid \prod_{i\in I} N_{K/\mathbb Q}\left(\prod_{k \leqslant x/f_2(x)} (w_i^k - 1)\right) \leqslant 2^{O((x/f_2(x))^2)}$$
(the implied constant also depends on $w_i$) and hence $\left|\Gamma_3\right| = o(x/\log x)$.
\end{proof}

\begin{remark}\label{masterremark}
Assuming GRH, $d(S)$ also exists if we replace $h_I$ by some non-empty $H\subset \mathbb Z_{>0}^n$, and we have
\begin{align}
\label{eq:density_sum_formula}
d(S(H))=\sum_{h_I\in H} d(S(h_I))\,.
\end{align}
For the existence of $d(S(H))$, let $K_C$ denote the primes $\mathfrak{p}$ of $K$ with $\left(\frac{F/K}{\mathfrak{p}}\right) \subset C$, let $d^\pm$ denote the upper/lower density and let $B(r) := \{1, \ldots , r\}^n$. We have
\begin{align*}
d_-(\Psi^{-1}H \cap K_C) \geqslant 
\lim_{r \to \infty} \sum_{h \in H \cap B(r)} d_-(\Psi^{-1}\{h\} \cap K_C) =
\lim_{r \to \infty} \sum_{h \in H \cap B(r)} d^+(\Psi^{-1}\{h\} \cap K_C) \geqslant\\
d^+(\Psi^{-1} H \cap K_C) - \lim_{r \to \infty}  d^+(\Psi^{-1} (\mathbb Z_{>0}^n \setminus B(r))) = 
d^+(\Psi^{-1}H \cap K_C)
\end{align*}
by Theorem~\ref{MasterTheorem} and because $d^-(\Psi^{-1}B(r))\rightarrow 1$. The last claim holds since it is well-known that under GRH for $a\in \mathbb Q\setminus \{0, \pm 1\}$
the density of the primes $p$ such that $\Ind_p(a) \leqslant c$ exists and tends to $1$ as $c \to \infty$, and the same holds for number fields following Hooley's proof \cite[Theorem 4.1, especially (3)]{hooley}, see also \cite[Theorem 4]{erdos-murty} or \cite[Theorem 5.1]{Wagstaff}.

Finally, \eqref{eq:density_sum_formula} follows from the observations above because, as $r \to \infty$, we have 
\begin{align*}
d(S(H)) = \sum_{h \in H \cap B(r)} d(S(h)) + o(1)\,.
\end{align*}
\end{remark}

In the following result we work with $\Psi_T$ for any squarefree integer $T\geqslant 2$. An image for this map is a tuple of images of $\Psi_\ell$ for $\ell\mid T$ prime. To ease notation we describe an element of $\Psi_T$ as a matrix whose entries are non-negative integers $e_{i,\ell}$.

\begin{corollary}\label{cor}
Assume GRH for (i).
\begin{enumerate}
\item[(i)]  Let $h_I\in \mathbb Z_{>0}^n$ and $h:=\lcm(h_1,\ldots, h_n)$. We have $h_I\in \im(\Psi)$ if and only if for every positive squarefree integer $Q$ there is an automorphism in 
$$\Gal\big(K(\zeta_{Qh},W_1^{1/Qh_1}, \ldots, W_n^{1/Qh_n})/K(\zeta_{h},W_1^{1/h_1}, \ldots, W_n^{1/h_n})\big)$$
which for any choice of $i\in I$ and $q\mid Q$ prime is not the identity on $K(\zeta_{q h_i}, W_i^{1/q h_i})$.
\item[(ii)] Let $e_I\in \mathbb Z_{\geqslant 0}^n$ and $e:=\max(e_1,\ldots, e_n)$. We have $e_I\in \im(\Psi_\ell)$ if and only if there is an automorphism in 
$$\Gal\big(K(\zeta_{\ell^{e+1}},W_1^{1/\ell^{e_1+1}}, \ldots, W_n^{1/\ell^{e_n+1}})/K(\zeta_{\ell^{e}},W_1^{1/\ell^{e_1}}, \ldots, W_n^{1/\ell^{e_n}})\big)$$
which for any choice of $i\in I$ is not the identity on $K(\zeta_{\ell^{e_i+1}}, W_i^{1/\ell^{e_i+1}})$.
\item[(iii)] Let $T\geqslant 2$ be a squarefree integer and let $\ell$ denote its prime factors. Let $E:=(e_{i,\ell})\in \mat (\mathbb Z_{\geqslant 0})$, set $H_i:=\prod_\ell \ell^{e_{i,\ell}}$ and $H:=\lcm(H_1,\ldots, H_n)$. Then we have $E\in \im(\Psi_T)$ if and only if there is an automorphism in 
$$\Gal\big(K(\zeta_{HT},W_1^{1/H_1T}, \ldots, W_n^{1/H_nT})/K(\zeta_{H},W_1^{1/H_1}, \ldots, W_n^{1/H_n})\big)$$
which for any choice of $i\in I$ and $\ell\mid T$ prime is not the identity on $K(\zeta_{\ell H_i}, W_i^{1/\ell H_i})$.
\end{enumerate}
\end{corollary}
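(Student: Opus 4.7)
The plan is to obtain part (i) as a direct specialization of the Master Theorem (Theorem~\ref{MasterTheorem}), while parts (ii) and (iii) will be unconditional consequences of the Chebotarev density theorem, with (ii) being simply the case $T=\ell$ of (iii).

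For (i), I would apply Theorem~\ref{MasterTheorem} with $F=K$ (and correspondingly $C=\Gal(K/K)$), so that the Frobenius-restriction condition becomes vacuous. With this choice, the set $C_Q$ from the Master Theorem coincides precisely with the set of automorphisms $\sigma\in \Gal(K_Q/K_1)$ described in the statement of (i): those not fixing $K(\zeta_{qh_i},W_i^{1/qh_i})$ for any $q\mid Q$ and any $i\in I$. The Master Theorem then gives that $S$ is non-empty if and only if $C_Q\neq \emptyset$ for every squarefree $Q$; since $h_I\in \im(\Psi)$ is equivalent to $S\neq \emptyset$, this is the claim. GRH is inherited from Theorem~\ref{MasterTheorem}.

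For part (iii), both directions rest on the correspondence that a prime $\mathfrak{p}$ splits in $K(\zeta_m, W^{1/m})$ if and only if $m\mid \Ind_\mathfrak{p}(W)$, which follows from the cyclicity of $k_\mathfrak{p}^\times$. For the forward direction, I take $\mathfrak{p}$ with $\Psi_T(\mathfrak{p})=E$ and consider its Frobenius $\sigma_\mathfrak{p}$ in $\Gal(K(\zeta_{HT},W_1^{1/H_1T},\ldots,W_n^{1/H_nT})/K)$. Since $H_i\mid \Ind_\mathfrak{p}(W_i)$ for each $i$, $\sigma_\mathfrak{p}$ fixes $K(\zeta_H,W_1^{1/H_1},\ldots,W_n^{1/H_n})$; since $\ell H_i\nmid \Ind_\mathfrak{p}(W_i)$ for each $i$ and each $\ell\mid T$, $\sigma_\mathfrak{p}$ is non-trivial on each $K(\zeta_{\ell H_i},W_i^{1/\ell H_i})$. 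Thus $\sigma_\mathfrak{p}$ (viewed in the specified quotient) provides the required automorphism. For the reverse direction, I apply the (unconditional) Chebotarev density theorem to obtain a prime $\mathfrak{p}$ whose Frobenius class in the ambient extension contains the given $\sigma$.

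The verification that such $\mathfrak{p}$ really satisfies $\Psi_T(\mathfrak{p})=E$ is the one small technical point, and is the step I would watch most carefully. From the triviality of $\sigma$ on $K(\zeta_H,W_1^{1/H_1},\ldots)$ we deduce $v_{\ell'}(\Ind_\mathfrak{p}(W_i))\geqslant e_{i,\ell'}$ for every $\ell'\mid T$ and $i\in I$. From the non-triviality on $K(\zeta_{\ell H_i},W_i^{1/\ell H_i})$ we get $\ell H_i\nmid \Ind_\mathfrak{p}(W_i)$; if we had $v_\ell(\Ind_\mathfrak{p}(W_i))\geqslant e_{i,\ell}+1$, the lower bounds at the primes $\ell'\neq \ell$ dividing $T$ would together imply $\ell H_i\mid \Ind_\mathfrak{p}(W_i)$, a contradiction. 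Hence $v_\ell(\Ind_\mathfrak{p}(W_i))=e_{i,\ell}$ for all $i$ and $\ell\mid T$, which is exactly $\Psi_T(\mathfrak{p})=E$. This completes the argument for (iii), and (ii) follows as the case $T=\ell$.
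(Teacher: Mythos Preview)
Your approach matches the paper's: part (i) is exactly the specialization $F=K$ of Theorem~\ref{MasterTheorem}, and parts (ii)/(iii) are handled via Chebotarev (the paper writes out (ii) and declares (iii) analogous, while you do the reverse, which is fine).

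There is one small gap in your forward direction for (iii) (and hence (ii)). You write ``take $\mathfrak{p}$ with $\Psi_T(\mathfrak{p})=E$ and consider its Frobenius $\sigma_{\mathfrak{p}}$'', but the Frobenius is only well-defined if $\mathfrak{p}$ is unramified in $K(\zeta_{HT},W_1^{1/H_1T},\ldots,W_n^{1/H_nT})$, and the Convention does not exclude all such ramified primes from the domain of $\Psi$. The paper closes this gap (and this is exactly what makes (ii) and (iii) unconditional): any $\mathfrak{p}$ in the domain of $\Psi$ that ramifies in $K(\zeta_{\ell^\infty},W^{1/\ell^\infty})$ must lie over $\ell$ with $\zeta_\ell\notin K$, and for such $\mathfrak{p}$ one has $\ell\nmid N\mathfrak{p}-1$, forcing $\Psi_\ell(\mathfrak{p})=0$; moreover the preimage of the zero tuple under $\Psi_\ell$ is infinite, so one can always replace $\mathfrak{p}$ by an unramified prime with the same $\Psi_\ell$-value. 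You should insert this observation (or restrict at the outset to unramified $\mathfrak{p}$ and argue that this loses nothing) to make your forward direction complete.
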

\begin{proof}
For (i) we can apply Theorem~\ref{MasterTheorem} with $F = K$ because $C_Q \neq \emptyset$ is equivalent to the existence of a suitable automorphism. We now prove (ii), the proof of (iii) being analogous. Set $L: = K\left(\zeta_{\ell^{e+1}}, W_1^{1/\ell^{e_1 + 1}}, \ldots , W_n^{1/\ell^{e_n + 1}}\right)$. If a suitable automorphism $\sigma$ exists, then by the Chebotarev density theorem there are infinitely many primes $\mathfrak{p}$ of $K$ (in the domain of $\Psi$ and not ramifying in $L$) such that $\sigma \in \left(\frac{L/K}{\mathfrak{p}}\right)$ and hence $\Psi_{\ell}(\mathfrak{p}) = e_I$. Else, for all $\mathfrak{p}$ in the domain of $\Psi_{\ell}$ not ramifying in $L/K$ and for all $\sigma\in \left(\frac{L/K}{\mathfrak{p}}\right)$, the automorphism $\sigma$ is not as in the statement: if it does not fix $\zeta_{\ell^e}$ and $W_i^{1/\ell^{e_i}}$, then the index of $W_i \bmod \mathfrak p$ is not divisible by $\ell^{e_i}$; else it fixes some of $K(\zeta_{\ell^{e_i + 1}}, W_i^{1/\ell^{e_i + 1}})$ hence the index of $W_i \bmod \mathfrak p$ is divisible by $\ell^{e_i+1}$. We conclude because by Theorem~\ref{MasterTheorem} (see Remark~\ref{masterremark}) the non-empty preimages of $\Psi_\ell$ are sets of positive density. We may also conclude without assuming GRH because the primes $\mathfrak p$ ramifying in $K(\zeta_{\ell^\infty}, W^{1/\ell^\infty})/K$ are excluded from the domain of $\Psi_{\ell}$ (if $v_{\mathfrak p}(w)\neq 0$ for some $w\in W$, this is because $W \bmod \p$ is not well-defined; if $\mathfrak p$ is over $\ell$ and $\zeta_\ell\in K$, because $\ell$ ramifies in $K$) or $\mathfrak p$ is over $\ell$ and $\zeta_\ell\notin K$, which means that $\Psi_{\ell}(\mathfrak p)$ is the constant tuple $0$, and the preimage of this tuple is infinite.
\end{proof}

\begin{remark}\label{remarkNEW1}
In the Index Map Problem we assume that the groups have positive rank. If some of them is finite, then the preimage of any finite subset of $\mathbb Z^n_{>0}$  is finite, see Remark~\ref{remarkROOTS}. However, the preimage of an infinite set could be an infinite set of density $0$ or without a Dirichlet density. For example, consider $K=\mathbb Q$ and $W_1=\langle 1 \rangle$,   $W_2=\langle 2 \rangle$ and the preimage of $S\times \mathbb Z_{>0}$, where $S\subset \mathbb Z_{>0}$ is any infinite set such that $\{s+1 \mid s\in S\}$ is a set of primes of density $0$ (respectively, without a Dirichlet density).
\end{remark}

\section{Computability of the image of the index map}

For $J\subset I$ and $i\in I\setminus J$, we call $k_{J, i}$ the smallest positive integer $z$ such that 
$W_i^z \subset \prod_{j \in J} W_j$ (setting $k_{J, i}=\infty$ if there is no $z$) and we call $k:=\lcm(k_{J,i})$ (treating $\infty$ as $1$), where the least common multiple is over $J \subset I$ and $i \in I \setminus J$.

\begin{theorem}\label{computable-image}
Assume GRH. Let $h_I\in \mathbb Z_{>0}^n$, $h:=\lcm(h_1,\ldots, h_n)$, and let $\omega$ be the number of distinct prime divisors of $h$.
Assuming that arithmetic operations and computing prime factors can be performed in negligible time, checking whether $h_I\in \im (\Psi)$ can be done in $O(\omega)$ steps (the implied constant depends on $K, F, n, W_1,\ldots, W_n$).
\end{theorem}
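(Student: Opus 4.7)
The plan is to apply Corollary~\ref{cor}(i), which characterizes $h_I \in \im(\Psi)$ as the existence, for every squarefree positive integer $Q$, of an automorphism in $G_Q := \Gal(K(\zeta_{Qh}, W_1^{1/Qh_1}, \ldots, W_n^{1/Qh_n})/K(\zeta_h, W_1^{1/h_1}, \ldots, W_n^{1/h_n}))$ that is non-trivial on $K(\zeta_{qh_i}, W_i^{1/qh_i})$ for every prime $q \mid Q$ and every $i \in I$. I will use linear disjointness from Kummer theory to decouple this ``$\forall Q$'' condition over the primes, dispose of all but $\omega + O(1)$ primes by a uniform argument, and perform the remaining checks in $O(1)$ time each.

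First I would invoke Proposition~\ref{justW}(iii) applied to $W := \langle W_1, \ldots, W_n\rangle$ to fix a squarefree positive integer $c$, depending only on $K, F, n, W_1, \ldots, W_n$, such that $K(\zeta_m, W^{1/m}) \cap K(\zeta_t, W^{1/t}) = K$ and $[K(\zeta_m, W^{1/m}) : K] = \varphi(m) m^{\rank W}$ whenever $\gcd(m, tc) = 1$. Iterating, the extensions $K(\zeta_q, W^{1/q})/K$ for distinct primes $q \nmid c$ are jointly linearly disjoint over $K$, which yields, for any squarefree $Q = Q_0 Q_1$ with $Q_0 \mid c$ and $\gcd(Q_1, c) = 1$, a product decomposition $G_Q \cong G_{Q_0} \times \prod_{q \mid Q_1} G_q$. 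The ``$\forall Q$'' condition therefore splits into two independent pieces: (a) for every prime $q \nmid c$, the local condition holds in $G_q$; and (b) for every divisor $Q_0$ of $c$, the $Q_0$-condition holds in $G_{Q_0}$. Part (b) involves at most $2^{\omega(c)} = O(1)$ divisors, each lying over a Galois group of bounded size (depending only on $K, F, n, W_1, \ldots, W_n$ and on $v_q(h_i)$ for $q \mid c$), and can thus be settled in $O(1)$ time using a one-time tabulation of the $\ell$-adic failures for $\ell \mid c$ supplied by Proposition~\ref{used}(iii) and the algorithms of \cite{debry-perucca, perucca-sgobba}.

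The key observation for part (a) is that the local condition is automatic whenever $q > 2$ and $q \nmid hc$: linear disjointness then gives $G_q \cong (\mathbb{Z}/q\mathbb{Z})^\times \ltimes (\mathbb{Z}/q\mathbb{Z})^{\rank W}$, and I can pick $\sigma_q$ with $\sigma_q(\zeta_q) = \zeta_q^x$ for any $x \not\equiv 1 \pmod q$ (which exists since $q \geqslant 3$); such $\sigma_q$ moves $\zeta_q$ and therefore fails to fix any subfield $K(\zeta_{qh_i}, W_i^{1/qh_i}) \supseteq K(\zeta_q)$. Consequently part (a) reduces to a per-prime check at the primes in $\{q \mid h \text{ and } q \nmid c\} \cup \{2\}$, of cardinality at most $\omega + 1$. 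At each such prime the structure of $G_q$ depends only on $(v_q(h_i))_{i \in I}$ together with the precomputed $\ell$-adic data for the $W_i$'s, and the decision reduces to a bounded enumeration in $G_q$ running in $O(1)$ arithmetic operations. Summing (a) and (b) yields a total cost of $\omega + O(1) = O(\omega)$.

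The main technical difficulty will be part (b): at primes dividing $c$ the Kummer extensions are non-maximal, and the joint structure of $G_{Q_0}$ across these primes cannot be read off from the individual $G_q$'s. This is handled in preprocessing by tabulating, once and for all (independently of $h_I$), the groups $G_{Q_0}$ for $Q_0 \mid c$ as functions of the finitely many possible tuples $(v_q(h_i))_{q \mid c,\, i \in I}$; each query then reduces to reading off these $O(1)$ valuations and consulting the table.
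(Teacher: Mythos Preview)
Your framework is the same as the paper's: reduce via the Master Theorem to a per-prime-$q$ condition, observe that for $q\nmid hc$ the condition is automatic (move $\zeta_q$), and treat the remaining $\omega+O(1)$ primes individually. However, the core content of the theorem is precisely the claim that each of those remaining per-prime checks costs $O(1)$, and this is where your argument has a genuine gap.

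For $q\mid h$ with $q\nmid c$, you write that ``the decision reduces to a bounded enumeration in $G_q$''. But $|G_q|=q^{1+\rank W}$, which is not bounded independently of $h$. More to the point, the condition that $\sigma$ be nontrivial on each $K(\zeta_{qh_i},W_i^{1/qh_i})$ is a system of linear congruences and incongruences modulo $q^{e_i}$ and $q^{e_i+1}$, where $e_i=v_q(h_i)$ can be arbitrarily large; you have not explained why its solvability can be decided in $O(1)$ operations. The paper spends most of its proof (Steps~2--4) on exactly this: it partitions the indices $i$ by the sizes of the gaps $e_i-e_{i'}$, rewrites the splitting conditions as an explicit system \eqref{system} of (in)congruences in bounded many unknowns, and then shows that for large $q$ the solvability depends only on the ordering of the $e_i$'s (so all large $q$ are handled at once), while for each small $q$ the relevant moduli differ by a bounded power of $q$ so a genuinely bounded brute force suffices. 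None of this is automatic.

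The same issue recurs in your part~(b). You propose to precompute a table indexed by ``the finitely many possible tuples $(v_q(h_i))_{q\mid c,\,i\in I}$'', but these tuples have unbounded entries, so there are infinitely many of them. The paper resolves this (cf.\ Remark~\ref{saving}(ii)) by showing that one may shrink large blocks of $e_i$'s without changing the answer, reducing to boundedly many cases; again this relies on the (in)congruence analysis you have omitted. A secondary point: Corollary~\ref{cor}(i) is stated for $F=K$, whereas the theorem and the paper's proof keep track of the Frobenius condition $\sigma|_F\in C$; you should start from Theorem~\ref{MasterTheorem} directly (and, as the paper does, enlarge $F$ so that the prime-by-prime decomposition over $F$ is valid).
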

\begin{proof}

\textbf{Step 1.} \emph{Reformulating the problem in terms of existence of a suitable automorphism, and reducing the problem to considering the Galois group corresponding to a single prime $q$.} By Theorem~\ref{MasterTheorem} and Proposition~\ref{justW}(iii), calling $Q$ the squarefree part of $c$, we have to check whether there is 
$$\sigma\in \Gal\left(F\left(\zeta_{Qh\tau}, W^{1/Qh}\right)/K\right)$$
such that $\sigma \mid_F\,\in C$ and $\sigma$ is the identity on $K(\zeta_{h_i}, W_i^{1/h_i})$ but for each choice of $i\in I$ and $q\mid Qh$ prime it is not the identity on $K(\zeta_{qh_i}, W_i^{1/qh_i})$.
Notice that, if $F'/F$ is a finite Galois extension and $C'\subset \Gal(F'/K)$ consists of the automorphisms whose restriction to $F$ is in $C$, then we do not change our problem by replacing $F$ by $F'$ and $C$ by $C'$.
Thus, up to extending $F$ independently of $h$ (and replacing $C$), by Proposition~\ref{justW}(ii) the cyclotomic-Kummer extensions for $W$ made with coprime parameters are linearly disjoint over $F$. Hence we may consider the problem separately for different values of $q \mid Qh$. 

\textbf{Step 2.} \emph{Reformulating the existence of a suitable automorphism in terms of generators of the groups $W_i$.} Call $e_I:=v_q(h_I)$. By symmetry we may assume $e_I$ is non-increasing. The splitting conditions then become: for all $i\in I$, 
\begin{equation}\label{sp-cond}
\text{$\sigma$ is the identity on $K(\zeta_{q^{e_i}}, W_i^{1/q^{e_i}})$ but not on $K(\zeta_{q^{e_i + 1}}, W_i^{1/q^{e_i + 1}})$.}
\end{equation}
We exclude the case where for some $j > i$ we have $W_j^{k}\subset \langle W_1,\ldots, W_{i}\rangle$ and $e_j <e_{i}-v_q(k\tau)$, as in this case we have $h_I \not\in \im \Psi$:  if $\sigma$ is the identity on $K(\zeta_{q^{e_{i'}}}, W_i^{1/q^{e_{i'}}})$ for all $i' \leqslant i$, then necessarily $\sigma$ is also the identity on $K(\zeta_{q^{e_j + 1}}, W_j^{1/q^{e_j + 1}})$.

We partition $I$ into intervals $T_j$, their starting points being $1$ and those $i\in I$ such that $e_i <e_{i-1}-v_q(k\tau)$. We call $\rho_i:=v_q(\# W_{i, tors})$, $\mathcal W_j:=\langle W_i : i\in T_j\rangle$, and for $i\in T_j$ we let $S_i\subset W_i$ be a minimal set (it is non-empty) of multiplicatively independent elements such that $W_i\subset \langle K^\times_{\tors}, S_i, \mathcal W_1^{1/k}, \ldots, \mathcal W_{j-1}^{1/k}\rangle$.

We prove that \eqref{sp-cond} holds for all $i \in I$ if and only if for all $i \in I$ we have 
\begin{equation}\label{sp-cond-new}
\text{$\sigma$ is the identity on $K(\zeta_{q^{e_i+\rho_i}}, S_i^{1/q^{e_i}})$ but not on $K(\zeta_{q^{e_i + \rho_i+ 1}}, S_i^{1/q^{e_i + 1}})$.}
\end{equation}
Notice that, for every $x\geqslant 1$, fixing $\mathbb{Q}(\zeta_{q^x}, W_i^{1/q^{x}})$ implies fixing $S_i^{1/q^{x}}$ and $\zeta_{q^{x + \rho_i}}$. Moreover, since $W_i \subset \langle K_{\tors}, S_i, \mathcal{W}_1^{1/k}, ... , \mathcal{W}_{j-1}^{1/k} \rangle$, then $W_i^{1/q^{y}}$ is fixed if $\zeta_{q^{y + \rho_i}}$, $S_i^{1/q^{y}}$, $\zeta_{q^{y+v_q(k\tau)-1}}$ and $\mathcal{W}_{j'}^{1/q^{y+v_q(k\tau)-1}}$ for $j' < j$ are all fixed. Taking $x = e_i$ and $y = e_i + 1$, the `only if' implication follows. The converse is proven similarly (by induction on $i$).

\textbf{Step 3.} \emph{Reformulating the existence of a suitable automorphism in terms of systems of linear (in)congruences.} Fixing $j$, let $e := e_{\min T_j}$, and let $\alpha_1, \ldots , \alpha_r\in \mathcal W_j$ be multiplicatively independent and such that $S_i\subset \langle K^\times_{\tors},\alpha_1, \ldots , \alpha_r\rangle$ for all $i\in T_j$. For any $\sigma$ define $\mathbf{x}:=(x_0,x_1,\ldots, x_r)$ as the smallest positive integers such that 
$$\sigma(\zeta_{Qh\tau}) = \zeta_{Qh\tau}^{x_0}\qquad\text{and}\quad \sigma\left(\alpha_i^{1/q^{e+1}}\right) = \zeta_{q^{e+1}}^{x_i} \alpha_i^{1/q^{e+1}}\quad\text{for all $i=1,\ldots, r$}\,.$$
For $i\in T_j$ and $w_{i,i'}\in S_i$ define $\mathbf{f}_{i, i'} = (f_0, \ldots , f_r)$ such that 
$$w_{i, i'}^{1/q^e} = \zeta_{q^{v_q(\tau)+e+1}}^{f_0}\alpha_i^{f_1/q^e} \cdots \alpha_r^{f_r/q^e}.$$
The splitting conditions concerning some $i\in T_j$ amount to

\begin{tabular}{lllll}
& $x_0 \equiv 1 \pmod{q^{e_i + \rho_i}}$ & {and} & $\langle \mathbf{x}, \mathbf{f}_{i, i'} \rangle \equiv 0 \pmod{q^{e_i}}$ &\,,\\
and $[$ & $x_0 \not\equiv 1 \pmod{q^{e_i + \rho_i+1}}$ & {or} & $\langle \mathbf{x}, \mathbf{f}_{i, i'} \rangle \not\equiv 0 \pmod{q^{e_i+1}}$ & {for some} $i'\;\;]$\,.
\end{tabular}

Here $\langle \cdot, \cdot \rangle$ is the usual inner product of vectors. 

\textbf{Step 4.} \emph{Showing how to efficiently decide whether the congruence problem has a solution.}

Considering all $i\in T_j$ we have various systems of linear incongruences, and the question is if at least one system has a solution in common with the linear congruences. Since there are only boundedly many system of incongruences, we may consider them separately and thus fix one of them.

Note that different sets $T_j$ give different systems of (in)congruences. The only common variable between these systems is $x_0$. Any common solution to the systems necessarily has $v_q(x_0 - 1) \geqslant \max_{i}(e_i + \rho_i)$, and thus we cannot use the incongruence for $x_0$ if $e_i + \rho_i$ is smaller than its maximal value over $i\in I$. Hence we essentially have no common variables between the different systems, and thus we may now fix some $T_j$.

Let $q^N$ and $S$ be the integer and the set from Lemma~\ref{lemma:kummer-galois}, and consider separately each $\mathbf{s}:=(s_0, \ldots, s_r)\in S$. Setting  $x_i := q^Ny_i + s_i$ with $\mathbf{y}:=(y_0,y_1,\ldots, y_r)$, and rewriting $\langle \mathbf{x}, \mathbf{f}_{i, i'} \rangle \equiv 0$ as $\langle \mathbf{y}, q^N\mathbf{f}_{i, i'}\rangle \equiv -\langle \mathbf{s}, \mathbf{f}_{i, i'}\rangle$, we get a system of the form
\begin{equation}
\label{system}
\begin{cases}
\langle \mathbf{y}, \mathbf{v}_i \rangle \equiv c(\mathbf{v}_i) \pmod{q^{e(\mathbf{v}_i)}} \quad \text{for all $i \in J$} \\
\langle \mathbf{y}, \mathbf{v}_i \rangle \not\equiv c(\mathbf{v}_i) \pmod{q^{e(\mathbf{v}_i) + 1}} \quad \text{for all $i \in J'$}
\end{cases}
\end{equation}
for some $J'\subset J\subset T_j$, for some vectors $\mathbf{v}_i$, and for some integers $c(\mathbf{v}_i)$ and $e(\mathbf{v}_i)\in \{e_i, e_i+\rho_i\}$. The vectors $\mathbf{v}_i$ consist of the vectors $q^N\mathbf{f}_{i, i'}$ and of the vector $(q^N, 0, \ldots , 0)$ to express the conditions for $x_0$. Notice that $\vert e(\mathbf{v}_i) - e(\mathbf{v}_{i'}) \vert \leqslant n v_q(k\tau)$ for all $i,i'\in J$.

Let $\mathcal{J}\subset J$ be a maximal subset such that $\mathbf{v}_i, i\in\mathcal{J}$ are $\mathbb Q$-linearly independent, and among the possible choices for $\mathcal {J}$, take any which minmizes $\sum_{i \in \mathcal{J}} e(\mathbf{v}_i)$. For $i\in J$ let $\mathbf{u}_i$ be the vector of coefficients to express $\mathbf{v}_i$ as a $\mathbb Q$-linear combination of $\mathbf{v}_j, j \in \mathcal{J}$. 

We claim that as $\mathbf{y}$ varies over integer vectors, the values for $\mathbf{z} :=  (\langle \mathbf{y}, \mathbf{v}_i \rangle)_{i \in \mathcal{J}}$ are all preimages of the (non-empty) set $\mathcal{M}$ of their reductions modulo $M$, for some positive integer $M$. Furthermore, $M$ and $\mathcal{M}$ can be calculated with an explicit finite procedure. Indeed, consider an integer tuple $(n_i)_{i \in \mathcal{J}}$ and the system
$$\langle \mathbf{y}, \mathbf{v}_i\rangle = n_i \quad \text{for all $i \in \mathcal{J}$}.$$
One can solve the system over $\mathbf{y} \in \mathbb{Q}^{|\mathcal{J}|}$, writing the components of $\mathbf{y}$ as $\mathbb{Q}$-linear combinations of $n_i$ (the coefficients depending on $\mathbf{v}_i$). Letting $M$ be the least common denominator of these coefficients, the components of $\mathbf{y}$ are integers if and only if certain linear combinations of $n_i$ are divisible by $M$. This is holds if and only if $(n_i)_{i \in \mathcal{J}}$ modulo ${M}$ lies in a certain set $\mathcal{M} \subset (\mathbb{Z}/M\mathbb{Z})^{|\mathcal{J}|}$, which may be computed by letting $\mathbf{y}$ range through $\{0, 1, \ldots , M-1\}^{|\mathcal{J}|}$ and considering the values of $(\langle \mathbf{y}, \mathbf{v}_i\rangle)_{i \in \mathcal{J}}$ modulo ${M}$.

Since $\langle \mathbf{y}, \mathbf{v}_i \rangle = \langle \mathbf{z}, \mathbf{u}_i \rangle$ we may rewrite \eqref{system} as
\begin{equation} \label{system2}
\begin{cases}
\langle \mathbf{z}, \mathbf{u}_i \rangle \equiv c(\mathbf{v}_i) \pmod{q^{e(\mathbf{v}_i)}} \quad \text{for all $i \in J$} \\
\langle \mathbf{z}, \mathbf{u}_i \rangle \not\equiv c(\mathbf{v}_i) \pmod{q^{e(\mathbf{v}_i) + 1}} \quad \text{for all $i \in J'$} \\
\left(\mathbf{z} \bmod{M}\right) \in \mathcal{M}.
\end{cases}
\end{equation}
By the Chinese remainder theorem we replace the last condition by $\left(\mathbf{z} \bmod{q^{v_q(M)}}\right) \in \left(\mathcal{M} \bmod{q^{v_q(M)}}\right)$ without changing the solvability of the system. As $\mathbf{u}_i, i \in \mathcal{J}$ are the standard basis for $\mathbb Q^{|\mathcal{J}|}$, $\mathbf{z}$ is determined modulo $q^{\min_{i \in \mathcal{J}} e(\mathbf{v}_i)}$.

Suppose first that $q$ is large enough (namely $q > |J'|$, $N = v_q(\tau) = v_q(M) = 0$, and 
the entries of $\mathbf{u}_i, i\in J$ are $0$ or have $q$-adic valuation $0$).
Then \eqref{system2} becomes
\begin{equation}\label{system3}
\begin{cases}
\langle \mathbf{z}, \mathbf{u}_i \rangle \equiv 0\pmod{q^{e(\mathbf{v}_i)}} \quad \text{for all $i \in J$} \\
\langle \mathbf{z}, \mathbf{u}_i \rangle \not\equiv 0 \pmod{q^{e(\mathbf{v}_i) + 1}} \quad \text{for all $i \in J'$},
\end{cases}
\end{equation}
As $\mathcal{J}$ is chosen such that $\sum_{i \in \mathcal{J}} e(\mathbf{v}_i)$ is minimized (among all $\mathcal{J}$ of maximal size), all congruences follow from the necessary conditions $z_i \equiv 0 \pmod{q^{e(\mathbf{v}_i)}}$ for all $i \in \mathcal J$. (Indeed, otherwise for some $i \in J$ and $i' \in \mathcal{J}$ we have $\mathbf{u}_{i, i'} \neq 0$ and $e(\mathbf{v}_i) < e(\mathbf{v}_{i'})$, and one could make $\sum_{i \in \mathcal{J}} e(\mathbf{v}_i)$ smaller by replacing $\mathcal{J}$ with $\mathcal{J} \cup \{i\} \setminus \{i'\}$.) As for the incongruences, there are $q^{|\mathcal{J}|}$ values of $\mathbf{z}$ where $0 \leqslant z_i < q^{e(\mathbf{v}_i) + 1}$ and $z_i \equiv 0 \pmod{q^{e(\mathbf{v}_i)}}$ for all $i\in \mathcal J$. If for some $i\in J'$ we have $\min_{j : \mathbf{u}_{i, j} \neq 0} e(\mathbf{v}_j) \geqslant e(\mathbf{v}_i) + 1$, then the incongruence for that index $i$ is not solvable, else the system is solvable, as each incongruence excludes at most $q^{|\mathcal{J}|-1}$ values.
All large values of $q$ may thus be solved at once (the $\mathbf{u}_i$'s depend on $q$ only through the ordering of the $e(\mathbf{v}_i)$'s).

For each of the finitely many remaining values of $q$ we may check the solvability of  \eqref{system2} by brute force. Indeed: Letting $V$ denote $\max_{i \in J} \max_{j \in \mathcal{J}} -v_q(\mathbf{u}_{i, j})$,  if $v_q(M) > \min_{i \in J} e(\mathbf{v}_i) - V$, this is because the moduli are bounded in terms of $K, F, n, W_1,\ldots, W_n$; else, we have $v_q(M) \leqslant \min_{i \in J} e(\mathbf{v}_i) - V$, and the congruences determine $(z_i \bmod q^{\max(e(\mathbf{v}_i) - V, 0)})$. Hence the condition on $(\mathbf{z} \bmod{q^{v_q(M)}})$ is either trivially satisfied or impossible, moreover the number of possible values for $(z_i \bmod q^{\max e(\mathbf{v}_i) + V + 1})$ to be checked is at most $q^{\max e(\mathbf{v}_i) + V + 1 - \min e(\mathbf{v}_i)}$.
\end{proof}

\begin{remark}\label{saving}
Assume GRH. Let $e_I\in \mathbb Z_{\geqslant 0}^n$ and consider the question whether $\Gal(\bar{K}/K)$ contains an automorphism whose restriction to $F$ is in $C$ and that for every $i\in I$ it is the identity on $K(\zeta_{q^{e_i}}, W_i^{1/q^{e_i}})$ but not on $K(\zeta_{q^{e_i}}, W_i^{1/q^{e_i+1}})$.
\begin{enumerate}
\item[(i)] By the proof of Theorem~\ref{computable-image},  the answer does not depend on $q$ for all $q$ larger than a constant (depending  on $K, F, n, W_1, \ldots , W_n$ and computable with an explicit finite procedure).
\item[(ii)] By the proof of Theorem~\ref{computable-image}, there is a constant $c$ (depending on $K$, $F$, $n$, $W_1, \ldots , W_n$ and computable with an explicit finite procedure) such that for any $q$ the answer is not affected by changing $e_I$ as follows: for some non-empty $J \subset I$ such that
$$d_J := \min_{j \in J} e_j - (c + \max_{i \in I \setminus J} e_i) > 0$$
we replace $e_i$ by $e_i-d_J$ for all $i\in J$. 
Indeed, if $c > v_q(k \tau)$, then the choice of the intervals $T_j$ in the proof stays the same, and if further $c > v_q(M) + V$ for any $M$ and $V$ as in the proof, we are either in the case of ``large'' $q$ or the case $v_q(M) \leqslant \min_{i \in J} e(\mathbf{v}_i) - V$ hence whether the system is solvable depends on the differences $e(\mathbf{v}_i) - e(\mathbf{v}_j)$ but not on the specific values of $e(\mathbf{v}_i)$.
\item[(iii)] If $F=K$, then we can answer by applying the Chebotarev density theorem (and the inclusion-exclusion principle), computing the degrees of the given  cyclotomic-Kummer extensions and of their various compositum fields.
\end{enumerate}
\end{remark}

\begin{example}
Assume GRH. Consider $K = F = \mathbb{Q}(i)$, $C = \{\text{id}_K\}$, $\alpha=2 + i$, $\beta = 3+2i$, and 
$$W_1 = \langle \alpha \rangle, \quad W_2 = \langle \beta \rangle, \quad W_3 = \langle \alpha\beta \rangle, \quad W_4 = \langle \alpha^2\beta \rangle\,.$$
By Example \ref{Gaussian} for every $m\geqslant 1$ we have $[K(\zeta_{\infty}, \alpha^{1/m}, \beta^{1/m}):K(\zeta_{\infty})]=m^2$. Hence for $h_I \in \mathbb{Z}_{> 0}^4$ we have $h_I \in \im(\Psi)$ if and only if $v_q(h_I) \in \im(\Psi_q)$ for every $q$. Indeed, by Theorem~\ref{MasterTheorem} the former condition means  $C_Q \neq \emptyset$ for all squarefree $Q$, and this in turn means (by the maximality of the above Kummer extensions) that $C_q \neq \emptyset$ for all primes $q$, i.e.\ $v_q(h_I) \in \im(\Psi_q)$ for all $q$.

For $q\neq 2$, $\im(\Psi_q)$ consists of those $e_I\in \mathbb{Z}_{\geqslant 0}^4$ such that $e_1 = e_2 = e_3 = e_4$ or there is a permutation $f$ such that $e_{f(1)} > e_{f(2)} = e_{f(3)}= e_{f(4)}$. We sketch the proof of this fact, supposing for simplicity that $e_I$ is non-increasing. Set $e:=e_1$. We use Corollary~\ref{cor} to reduce the question of whether $e_I \in \im(\Psi_q)$ to finding an automorphism $\sigma$ of $L := K(\zeta_{q^{e + 1}}, W^{1/q^{e + 1}})/K$ that for every $i$ is the identity on $K(\zeta_{q^{e_i}}, W_i^{1/q^{e_i}})$ but not on $K(\zeta_{q^{e_i+1}}, W_i^{1/q^{e_i+1}})$. Since we have $[L : K] = \varphi(q^{e+1})q^{2(e+1)}$, for any $\mathbf{x}=(x_0, x_1, x_2)$ such that $q\nmid x_0$ there exists an automorphism $\sigma$ of $L$ such that 
$$\sigma(\zeta_{q^{e+1}}) = \zeta_{q^{e+1}}^{x_0}, \quad \sigma\left(\alpha^{1/q^{e+1}}\right) = \zeta_{q^{e+1}}^{x_1}\alpha^{1/q^{e+1}}, \quad \sigma\left(\beta^{1/q^{e+1}}\right) = \zeta_{q^{e+1}}^{x_2}\beta^{1/q^{e+1}}\,.$$

We now look for $\mathbf{x}$ satisfying the congruences 
\[
\begin{cases}
x_0 \equiv 1 & \pmod{q^{e_1}} \\
x_1 \equiv 0 &\pmod{q^{e_1}} \\
x_2 \equiv 0 & \pmod{q^{e_2}} \\
x_1 + x_2 \equiv 0 & \pmod{q^{e_3}} \\
2x_1 + x_2 \equiv 0 & \pmod{q^{e_4}}
\end{cases}
\]
and some incongruences, e.g.\ for $W_4$ we can have $x_0 \not\equiv 1 \pmod{q^{e_4 + 1}}$ or $2x_1 + x_1 \not\equiv 0 \pmod{q^{e_4 + 1}}$. 
Partitioning $I$ into intervals regrouping the indices $i$ with the same $e_i$, we get either $\{1\}, \{2,3,4\}$ or $\{1,2,3,4\}$, else there is no $\mathbf{x}$ as requested.
In the former case there are solutions, as we require $x_0 \equiv 1 \pmod{q^{e_1}}, x_1 \equiv 0 \pmod{q^{e_1}}, x_2 \equiv 0 \pmod{q^{e_2}}$ and $x_2 \not\equiv 0 \pmod{q^{e_2+1}}$, and $x_0 \not\equiv 1 \pmod{q^{e_1+1}}$ or $x_1 \not\equiv 0 \pmod{q^{e_1+1}}$. In the latter case there are solutions, as we require $x_0 \equiv 1 \pmod{q^{e_1}}, x_1 \equiv x_2 \equiv 0 \pmod{q^{e_1}}$, and $x_0 \not\equiv 1 \pmod{q^{e_1+1}}$ (which can be satisfied for any $q \geqslant 3$) or $x_1, x_2, x_1+x_2, 2x_1 + x_2 \not\equiv 0 \pmod{q^{e_1+1}}$ (which can be satisfied for $q > 3$). \end{example}

\begin{theorem}\label{wishlist}
Assume GRH. The image of the index map is computable with an explicit finite procedure. Moreover, there exists a positive squarefree integer $Q$ (computable with an explicit finite procedure) such that the following holds: 
\begin{enumerate}
\item[(i)] The image of $\Psi_\ell$ for $\ell\nmid Q$ does not depend on $\ell$. 
\item[(ii)] For $h_I\in \mathbb Z_{>0}^n$, we have $h_I\in \im\Psi$ if and only if $v_Q(h_I)\in \im \Psi_Q$ and $v_\ell(h_I)\in \im \Psi_\ell$ for all $\ell\nmid Q$.
\end{enumerate}
\end{theorem}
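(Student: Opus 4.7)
The plan is to choose $Q$ so as to absorb several simultaneous stabilization phenomena, then derive (i)--(ii) by combining Corollary~\ref{cor} with the linear disjointness of Proposition~\ref{justW}(iii), and finally obtain a finite description of $\im\Psi$ by finitely describing each local image $\im\Psi_\ell$ via the reductions of Remark~\ref{saving}(ii).

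First I take $Q$ to be the squarefree part of the constant $c$ from Proposition~\ref{justW}(iii), applied to $W:=\langle W_1,\ldots,W_n\rangle$, where $c$ is chosen divisible by $\Omega\tau$ and so that $F\cap K(\zeta_\infty,W^{1/\infty})\subset K(\zeta_c,W^{1/c})$; then I enlarge $Q$ to contain every prime below the stabilization constants from Remark~\ref{saving}(i) and (ii). All these quantities are explicitly computable, hence so is $Q$. Part (i) now follows at once: by Corollary~\ref{cor}(ii), $e_I\in\im\Psi_\ell$ amounts to the solvability of the linear (in)congruence system analyzed in the proof of Theorem~\ref{computable-image}, and by Remark~\ref{saving}(i) that solvability is independent of $\ell$ once $\ell\nmid Q$.

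For (ii), the forward direction is immediate from the definitions of $\Psi_\ell$ and $\Psi_Q$. Conversely, Corollary~\ref{cor}(i) tells us that $h_I\in\im\Psi$ is equivalent to the existence, for every positive squarefree $R$, of an appropriate automorphism of $FK(\zeta_{Rh},W_1^{1/Rh_1},\ldots,W_n^{1/Rh_n})/K$. Writing $R=R_QR'$ with $R_Q$ supported on $Q$ and $R'$ coprime to $Q$, Proposition~\ref{justW}(iii) ensures that the extensions generated by the $R_Q$-data, by each prime-$\ell$ part of the $R'$-data, and by $F$ are linearly disjoint over the base $K(\zeta_h,W_1^{1/h_1},\ldots,W_n^{1/h_n})$. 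Hence the target Galois group factors as a direct product of local pieces, and a global automorphism with the required behaviour exists if and only if one exists on each piece: the $R_Q$-piece condition is exactly $v_Q(h_I)\in\im\Psi_Q$, while the $\ell$-piece condition, for each $\ell\mid R'$, is $v_\ell(h_I)\in\im\Psi_\ell$. Since $R'$ may be chosen to include every prime dividing $h$ and lying outside $Q$, this yields (ii). Finally, by (i)--(ii) the image $\im\Psi$ is determined by $\im\Psi_Q$ together with $\im\Psi_\ell$ for any single $\ell\nmid Q$. By Remark~\ref{saving}(ii), every tuple in either set admits an ``irreducible'' representative whose entries are bounded by a computable constant, so there are only finitely many such representatives and each is testable via Theorem~\ref{computable-image}; saturating these under the reverse of the reduction then produces a finite description of $\im\Psi$.

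The main obstacle is the linear disjointness invoked in the proof of (ii): Proposition~\ref{justW}(iii) gives disjointness over $K$ for moduli coprime to the constant $c$, but the decoupling really needs to hold over the cyclotomic-Kummer base $K(\zeta_h,W_1^{1/h_1},\ldots,W_n^{1/h_n})$ from Corollary~\ref{cor}(i), so one must carefully track the interaction between $h$, $F$, and the $Q$-factor. Once this disjointness is cleanly in place, the remaining steps are bookkeeping on top of the existing machinery.
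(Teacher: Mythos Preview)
Your proposal is correct and follows essentially the same route as the paper: choose $Q$ from Proposition~\ref{justW}(iii) (with the $F$-condition) enlarged past the stabilization threshold of Remark~\ref{saving}(i); derive (i) from Remark~\ref{saving}(i) via Corollary~\ref{cor}(ii); derive (ii) by combining Corollary~\ref{cor}(i) with the linear disjointness of Proposition~\ref{justW} to split the automorphism problem into a $Q$-part and individual $\ell$-parts; and finish computability via Remark~\ref{saving}(ii). Your flagged obstacle (disjointness over the Kummer base rather than over $K$) is exactly the point the paper also glosses over with a bare citation of Proposition~\ref{justW}.

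One small gap worth noting: in your computability step you invoke Remark~\ref{saving}(ii) for $\im\Psi_Q$ as well as for $\im\Psi_\ell$, but Remark~\ref{saving}(ii) is stated for a single prime $q$, whereas $\Psi_Q$ packages several primes $\ell\mid Q$ at once. The paper handles this by recalling that in the proof of Theorem~\ref{computable-image} one first enlarges $F$ (and replaces $C$ accordingly) so that the problems for the various $\ell\mid Q$ become independent, the residual entanglement being absorbed into the Frobenius condition; only then can each $\ell\mid Q$ be treated individually via Remark~\ref{saving}(ii). You should make this decoupling step explicit.
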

\begin{proof}
Assertion (i) is a consequence of Remark~\ref{saving}(i) (and mimicking the proof of Theorem~\ref{computable-image} directly in the $\ell$-adic setting is likely to provide an alternative proof without assuming GRH). Indeed, as in the proof of Theorem~\ref{computable-image}, we can set $Q$ to be the squarefree part of the constant from Proposition~\ref{justW}(iii), where we require the additional condition with $F$ (enlarging $F$ as in the proof of Theorem~\ref{computable-image} and replacing $C$ accordingly).

We now prove (ii). Corollary~\ref{cor}(i) allows to reduce the condition $h_I\in \im\Psi$ to the existence of a certain automorphism. By the independence of Kummer-type extensions (see Proposition~\ref{justW}) we may reduce the problem to extensions w.r.t.\ $\ell \mid Q$ and other values of $\ell$ separately. Then we conclude by Corollary~\ref{cor}(ii)-(iii).

For the computability of the image, we are left to show that we can determine $\im \Psi_Q$ and $\im \Psi_\ell$ for some $\ell\nmid Q$. Notice that checking whether a given value is in $\im \Psi_Q$ (respectively, $\im \Psi_\ell$) can be done explicitly by Corollary~\ref{cor}.
We conclude because Remark~\ref{saving}(ii) reduces computing $\Psi_\ell$ to checking a computable finite list of values: to determine $\im \Psi_Q$, notice that in the proof of Theorem~\ref{computable-image} we extend $F$ so that the problems with different $\ell$ are independent (absorbing the interactions into the condition on the Artin symbol), thus primes $\ell\mid Q$ can be checked individually.
\end{proof}

\section{The index map for separated groups}\label{section-sep}

\begin{remark}\label{check}
Assume GRH, and recall Corollary~\ref{cor}. If the $W_i$'s have separated Kummer extensions w.r.t.\ $x_I\in \mathbb Z_{>0}^n$, then for all $y_I\in \mathbb Z_{>0}^n$ such that $x_I\in \otimes y_I$ we have $y_I\in \im (\Psi)$ if and only if there is an automorphism in
$$\Gal\big(K(\zeta_{\lcm(x_i)},W_1^{1/x_1}, \ldots, W_n^{1/x_n})/K(\zeta_{\lcm(y_i)},W_1^{1/y_1}, \ldots, W_n^{1/y_n})\big)$$
which, for every prime $q$ and $i\in I$ such that $q y_i\mid x_i$, is not the identity on $K(\zeta_{q y_i}, W_i^{1/q y_i})$.
Indeed, the difference w.r.t.\ Corollary~\ref{cor} (i) is considering the last condition only for  certain pairs $(q,i)$. This will be sufficient because those pairs $(q,i)$ such that $q y_i\mid x_i$ are taken care of by  assumption, and for the remaining ones we may perform an extension argument: in case $q y_i\nmid x_i$, we can consider the Kummer extension with parameter $qx_i$ in place of $x_i$ and by separatedness we can extend the given automorphism so that the requested condition also holds for this pair $(q,i)$. Similarly, if the $W_i$'s have $\ell$-separated Kummer extensions w.r.t.\ $x_I\in \mathbb Z_{\geqslant 0}^n$ and $y_I\in \mathbb Z_{\geqslant 0}^n$ is such that $x_I\in \oplus y_I$, then we have $y_I\in \im(\Psi_\ell)$ if and only if there is an automorphism in 
$$\Gal\big(K(\zeta_{\ell^{\max (x_i)}},W_1^{1/\ell^{x_1}}, \ldots, W_n^{1/\ell^{x_n}})/K(\zeta_{\ell^{\max (y_i)}},W_1^{1/\ell^{y_1}}, \ldots, W_n^{1/\ell^{y_n}})\big)$$
which, for every $i\in I$ such that $y_i< x_i$, is not the identity on $K(\zeta_{\ell^{y_i+1}}, W_i^{1/\ell^{y_i+1}})$.
\end{remark}

\begin{theorem}\label{thm-imf-sep}
Assume GRH. The following are equivalent for $x_I\in \mathbb Z_{>0}^n$ (respectively, for $x_I\in \mathbb Z_{\geqslant 0}^n$):
\begin{enumerate}
\item[(i)] The $W_i$'s have separated (respectively, $\ell$-separated) Kummer extensions w.r.t.\ $x_I$.
\item[(ii)] We have $\otimes x_I \subset \im (\Psi)$ (respectively, $\oplus x_I \subset \im (\Psi_\ell)$).
\item[(iii)] For each $y_I \in \mathbb Z_{>0}^n$ (respectively, $\mathbb Z_{\geqslant 0}^n$) the following holds: 
$$y_I \in \im(\Psi) \Leftrightarrow \gcd(y_I, x_I)\in \im(\Psi)$$ (respectively, $y_I\in \im(\Psi_\ell)  \Leftrightarrow \min(y_I, x_I)\in \im(\Psi_\ell)$).
\end{enumerate}
\end{theorem}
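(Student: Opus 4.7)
I would prove the equivalence by the cycle (i) $\Rightarrow$ (iii) $\Rightarrow$ (ii) $\Rightarrow$ (i), handling the integer and $\ell$-adic versions in parallel since the arguments are structurally identical (with $\gcd/\otimes$ replaced by $\min/\oplus$ and Corollary~\ref{cor}(i) replaced by Corollary~\ref{cor}(ii)). The workhorse is Remark~\ref{check}: under the separatedness hypothesis (i), membership in $\im\Psi$ reduces to the existence of an automorphism of a single \emph{finite} Kummer extension satisfying prescribed non-identity conditions, which makes the three conditions directly comparable.

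For (i) $\Rightarrow$ (iii) in the $\ell$-adic case, I would apply Remark~\ref{check} with separatedness tuple $\max(x_I, y_I)$ to characterize ``$y_I \in \im\Psi_\ell$'' and with separatedness tuple $x_I$ to characterize ``$\min(y_I, x_I) \in \im\Psi_\ell$''. Partitioning $I = A \sqcup B$ with $A = \{i : y_i \leqslant x_i\}$ and $B = \{i : y_i > x_i\}$, in both characterizations the non-identity conditions collapse to exactly the same list: for each $i \in A$ with $y_i < x_i$, non-identity on $K(\zeta_{\ell^{y_i+1}}, W_i^{1/\ell^{y_i+1}})$, and no conditions on $B$ (since at those coordinates target and separatedness tuple coincide). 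By the product decomposition of Kummer extensions guaranteed by $\ell$-separatedness, the two ambient Galois groups are canonically isomorphic and the non-identity conditions correspond, so the two image-membership problems are equivalent. The integer case decomposes prime by prime.

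For (iii) $\Rightarrow$ (ii) I would produce a witness in $\oplus x_I \cap \im\Psi_\ell$ by Chebotarev: any prime $\mathfrak p$ splitting completely in $K(\zeta_{\ell^{\max x_i}}, W_1^{1/\ell^{x_1}}, \ldots, W_n^{1/\ell^{x_n}})$ satisfies $v_\ell(\Ind_{\mathfrak p} W_i) \geqslant x_i$ for every $i$, hence $\Psi_\ell(\mathfrak p) \geqslant x_I$, and such primes have positive density. Writing $z_I := \Psi_\ell(\mathfrak p)$, we have $\min(z_I, x_I) = x_I$, so (iii) forces $x_I \in \im\Psi_\ell$; applying (iii) again to any $y_I \geqslant x_I$ gives $\min(y_I, x_I) = x_I \in \im\Psi_\ell$, hence $y_I \in \im\Psi_\ell$, which is (ii). The integer case is identical using a prime splitting completely in $K(\zeta_{\lcm(x_i)}, W_1^{1/x_1}, \ldots, W_n^{1/x_n})$.

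For (ii) $\Rightarrow$ (i), which I expect to be the main obstacle, I would argue by contrapositive. If $\ell$-separatedness fails at $x_I$ then by Proposition~\ref{used}(iii) some coordinate satisfies $x_j < e_{\ell, j}$, and by the defining maximality of $e_{\ell, j}$ we obtain the inclusion
$$K(\zeta_{\ell^\infty}, W_j^{1/\ell^{e_{\ell, j}}}) \subseteq K(\zeta_{\ell^\infty}, W_{\neq j}^{1/\ell^\infty}, W_j^{1/\ell^{e_{\ell, j} - 1}})\,,$$
which, since the left-hand side is a finite extension of $K$, descends to some finite level $N \geqslant e_{\ell, j}$. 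I then take $y_I \in \oplus x_I$ with $y_j = e_{\ell, j} - 1$ (so $y_j \geqslant x_j$) and $y_i \geqslant N$ for $i \neq j$: any Frobenius $\sigma$ realizing $\Psi_\ell(\mathfrak p) = y_I$ is the identity on $\zeta_{\ell^N}$, on $W_{\neq j}^{1/\ell^N}$ and on $W_j^{1/\ell^{y_j}}$, so by the descended inclusion it is also the identity on $W_j^{1/\ell^{y_j + 1}}$, contradicting the non-identity requirement at level $y_j + 1$ supplied by Corollary~\ref{cor}(ii). Hence $y_I \notin \im\Psi_\ell$, contradicting (ii). The delicate point is turning the single inclusion at the top level $e_{\ell, j}$ into an obstruction at the intermediate level $y_j + 1$ while keeping $y_I$ inside $\oplus x_I$; this is the core of the argument. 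For the integer statement the same strategy applies $\ell$ by $\ell$, with the subtlety that $v_\ell(h_i)$ can exceed $e_{\ell, i}$ by up to $v_\ell(\tau)$ at primes dividing $\tau$, which is handled using the cyclotomic–Kummer description of Proposition~\ref{gq}.
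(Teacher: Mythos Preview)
Your cycle (i)$\Rightarrow$(iii)$\Rightarrow$(ii)$\Rightarrow$(i) matches the paper's strategy: Remark~\ref{check} and Proposition~\ref{used} drive the forward implications, Chebotarev supplies a completely-splitting prime for (iii)$\Rightarrow$(ii), and $\lnot$(i)$\Rightarrow\lnot$(ii) goes by constructing an explicit bad tuple. The paper differs in two details worth noting.

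For (i)$\Rightarrow$(iii), your claimed ``canonical isomorphism'' of $\Gal(L_{\max(x_I,y_I)}/L_{y_I})$ with $\Gal(L_{x_I}/L_{\min(x_I,y_I)})$ amounts (via $L_{x_I}\cdot L_{y_I}=L_{\max(x_I,y_I)}$) to the equality $L_{x_I}\cap L_{y_I}=L_{\min(x_I,y_I)}$, which $\ell$-separatedness does not obviously deliver. The paper is no more explicit here---it writes only ``reasoning similarly to the previous remark''---and what both of you are really using is the one-coordinate-at-a-time extension argument already built into Remark~\ref{check}; one direction is restriction, and the other enlarges the separatedness tuple step by step. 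Your sketch is fine in spirit but the ``product decomposition'' phrasing oversells what separatedness gives.

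For $\lnot$(i)$\Rightarrow\lnot$(ii) in the integer case, the paper sidesteps the $v_\ell(h_j)>e_{\ell,j}$ subtlety you flag by never reducing to the $\ell$-adic statement at all: it uses the $\zeta_\infty$-characterisation of $v_\ell(h_j)$ in Proposition~\ref{used}(iii) directly, descends a single field equality
\[
K(\zeta_N,W_1^{1/qN_1},W_{\ne 1}^{1/N})=K(\zeta_N,W_1^{1/N_1},W_{\ne 1}^{1/N})
\]
to a finite level $N$ with $x_1\mid N_1$, and reads off $(N_1,N,\ldots,N)\in\otimes x_I\setminus\im\Psi$. No detour through Proposition~\ref{gq} is needed; your instinct that this is ``the core of the argument'' is right, but the paper's route is shorter.
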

\begin{proof} 
For any $X_I\in \mathbb Z_{>0}^n$ such that the Kummer extensions are separated w.r.t.\ $X_I$ we have $X_I\in \im (\Psi)$, see the above remark. Thus (i) implies (ii) because $\otimes x_I \subset Sep$ by Proposition~\ref{used}. Reasoning similarly to the previous remark, (i) implies (iii). For every $x_I$ we have $\im(\Psi)\cap \otimes x_I\neq \emptyset$ (consider primes of $K$ that split completely in $K(\zeta_{\lcm(x_i)}, W^{1/\lcm(x_i)})$) and hence (iii) implies (ii). If (i) does not hold, then w.l.o.g.\ there are positive integers $N_1,N$ and a prime $q$ such that $x_1\mid N_1$, $\lcm (x_1,\ldots, x_n, q N_1)\mid N$ and  
$$K(\zeta_N, W_1^{1/q N_1}, W_{\neq 1}^{1/N})=K(\zeta_N, W_1^{1/N_1}, W_{\neq 1}^{1/N})$$ 
hence $(N_1, N,\ldots, N)\notin \im(\Psi)$ and (ii) does not hold. The equivalence of the respective assertions is analogous.
\end{proof}

\begin{remark}
To determine the image of $\Psi$ (respectively, $\Psi_\ell$) for separated groups, it suffices to compute the image of $\mathfrak p$ for the finitely many excluded primes (see Section~\ref{Notation}) and, by Proposition~\ref{used}(iii) and Theorem~\ref{thm-imf-sep}, apply finitely many times Remark~\ref{check}.
Moreover, $\im(\Psi)$ is the preimage of its reduction modulo $M$ for some positive integer $M$ if and only if the groups are separated, see (iii) and (ii) for $\ell\nmid M$ of Theorem~\ref{thm-imf-sep}.
\end{remark}

We then determine the image of the index map $p \mapsto \Ind_p(a)$ over $\mathbb{Q}$. Notice that we can write $a\in \mathbb Q^\times \setminus \{\pm 1\}$ uniquely in the form
\begin{align}
\label{eq:a-rep}
a = (-1)^\epsilon  \left( b^2 \cdot 2^{\delta} \cdot T\right)^{2^d}
\end{align}
where $\epsilon, \delta\in \{0,1\}$, $b \in \mathbb{Q}^{\times}$, $d\geqslant 0$, with at least one of $\delta = 1$ and $T \neq 1$ holding, and where $T$ is a squarefree integer of the form
$$T=\prod_{p_i \equiv 1 \!\!\!\!\pmod{4}} p_i \prod_{q_j \equiv 3\!\!\!\! \pmod{4}} (-q_j)$$
where $p_i, q_j$ are prime numbers. 

In the following result we see sets of excluded values for the index map: $E_{\square}$ is because the index for squares cannot be odd; $E_{T}$ is because in the given case $T\mid \Ind_p(a)$ implies $2\mid \Ind_p(a)$; 
$E_{2T}$ is because in the given case $2T\mid \Ind_p(a)$ implies $4\mid \Ind_p(a)$; $E_{3,T}$ is because in the given case $2^m(T/3) \mid \Ind_p(a)$ implies $3 \mid \Ind_p(a)$.

\begin{theorem}\label{image-f-Q} Let $a \in \mathbb{Q}^{\times} \setminus \{\pm 1\}$. The image of the index map $p \mapsto \Ind_p(a)$ as $p$ ranges over the odd primes with $v_p(a) = 0$ is 
$$\mathbb{Z}_{> 0} \setminus (E_{\square} \cup E_{T}\cup E_{2T} \cup E_{3, T})$$
where, writing $a$ as in \eqref{eq:a-rep}, we define 
\begin{center}
\begin{tabular}{rlll}
$E_{\square}$&$ =$&$ \{2n+1 : n \in \mathbb{Z}_{\geqslant 0}\}$ & \text{if $d \geqslant 1$ and $\epsilon = 0$}\\
$E_{T}$ &$=$&$ \{(2n+1)|T| : n \in \mathbb{Z}_{\geqslant 0}\}$ & \text{if $d = \delta = \epsilon = 0$}\\
$E_{2T}$ &$ =$&$ \{(2n+1)2|T| : n \in \mathbb{Z}_{\geqslant 0}\}$ & \text{if $d = \delta = \epsilon = 1$}\\
$E_{3, T}$ &$ =$&$ \{n2^m|T|/3 : n \in \mathbb{Z}_{\geqslant 0}, 3 \nmid n\}$  & \text{if $a$ is a cube in $\mathbb{Q}$ and $3 \mid T$}
\end{tabular}
\end{center}
(the sets have to be considered empty if the conditions are not met) and
$$m:=\begin{cases} 
d+1 & \text{if $\epsilon = \delta = 0$}\\
\max(d+1, 3) & \text{if $\epsilon = 0, \delta = 1$}\\
 d+2 & \text{if $\epsilon = 1, \delta = 0$}\\
 \max(d+2, 3) & \text{if $\epsilon = \delta = 1$ and $d \neq 1$}\\
 2 & \text{if $\epsilon = \delta = 1$ and $d = 1$}\,.\\
 \end{cases}$$
\end{theorem}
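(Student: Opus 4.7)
The plan is to apply Corollary~\ref{cor}(i) with $K = F = \Q$, which translates the question ``$h \in \im(\Psi)$'' into the existence, for every squarefree $Q$, of a Galois automorphism of $\Q(\zeta_{Qh}, a^{1/Qh})/\Q$ fixing $\Q(\zeta_h, a^{1/h})$ but acting non-trivially on each $\Q(\zeta_{qh}, a^{1/qh})$ for $q \mid Q$. Thanks to Proposition~\ref{justW}(iii) the $q$-adic components decouple outside a finite set, so by Theorem~\ref{wishlist} one may compute $\im(\Psi)$ prime by prime, with attention paid only to the finitely many ``bad'' primes in $\{2\} \cup \{q : q \mid |T|\}$.

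The arithmetic input I would invoke is a complete description of the Kummer-cyclotomic entanglements arising from the specific shape of $a$. Over $\Q$ these stem from the classical embeddings $\sqrt{-1}, \sqrt{2} \in \Q(\zeta_8)$ and $\sqrt{p^*} \in \Q(\zeta_p)$ with $p^* := (-1)^{(p-1)/2}p$. Crucially, the sign convention $T = \prod p_i \prod(-q_j)$ forces $T \equiv 1 \pmod 4$, so $\sqrt{T} \in \Q(\zeta_{|T|})$. For any odd prime $q \nmid |T|$ the Kummer-cyclotomic extension $\Q(\zeta_{q^\infty}, a^{1/q^\infty})/\Q$ has maximal degree (cf.\ Proposition~\ref{justW}), so $\im(\Psi_q) = \Z_{\geqslant 0}$ and no exclusion comes from such $q$. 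All exclusions must therefore arise at $q = 2$, at primes $q \mid |T|$, or in the mixed case $q = 3$ when $a$ is a cube with $3 \mid T$.

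For the necessity of the four exception sets, I would derive each obstruction from the entanglements just identified. $E_\square$ comes from the tautology ``$a$ is a square forces $2 \mid \Ind_p(a)$''. $E_T$ comes from: $|T| \mid \Ind_p(a)$ forces $p \equiv 1 \pmod{|T|}$, hence $\sqrt{T} \in \mathbb F_p$, so $a = b^2 T$ is a quadratic residue and $2 \mid \Ind_p(a)$. $E_{2T}$ is analogous using $a = -(2Tb^2)^2$ together with $\sqrt{-1} \in \Q(\zeta_4)$ to import an extra factor of $2$ once $2|T| \mid \Ind_p(a)$. $E_{3,T}$ captures the mixed case: divisibility of $\Ind_p(a)$ by $2^m \cdot |T|/3$ forces, through the $2$-adic entanglements combined with $\sqrt{T/3} \in \Q(\zeta_{|T|/3})$, that $\zeta_3 \in \mathbb F_p$, whereupon $a^{1/3} \in \Q$ yields automatically $3 \mid \Ind_p(a)$. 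The five-case formula for $m$ is obtained by writing $\sqrt{a^{1/2^d}}$ explicitly inside $\Q(\zeta_{8|T|})$ in each regime for $(\epsilon,\delta,d)$: the exceptional value $m = 2$ at $\epsilon = \delta = 1, d = 1$ reflects the coincidence $\sqrt{-(2Tb^2)^2} = 2Tb^2\sqrt{-1}$, whose cyclotomic form is simpler than the generic $\max(d+2,3)$ would suggest.

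For sufficiency I would apply Corollary~\ref{cor}(i) once more: at each good prime, maximality makes the construction of the automorphism unconstrained; at each bad prime $q$, the construction reduces, as in Step~3 of the proof of Theorem~\ref{computable-image}, to a finite system of linear congruences and incongruences on $(x_0, x_1) \in \Z/q^N \times \Z/q^N$. One verifies directly in each of the five $m$-regimes at $q = 2$, and in the cube/non-cube split at $q = 3$, that the system is solvable precisely when the $q$-part of $h$ avoids the $q$-part of $E_\square \cup E_T \cup E_{2T} \cup E_{3,T}$. The main obstacle is precisely this final case analysis: coordinating the five $2$-adic cases with the $3$-adic cube case, and tracking how the $\sqrt{\pm T}, \sqrt{\pm 2T}$ entanglements interact with the cube-of-$a$ entanglement requires careful bookkeeping to confirm that no exclusion beyond the listed four appears.
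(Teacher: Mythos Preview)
Your overall plan (translate via Corollary~\ref{cor}(i), identify the entanglements $\sqrt{T}\in\Q(\zeta_{|T|})$, $\sqrt{-1},\sqrt{2}\in\Q(\zeta_8)$, then split into necessity and sufficiency) is sound, and your necessity arguments for each of the four exclusion sets are correct. The paper proceeds differently, however, and your sufficiency argument has a structural gap.

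You propose to invoke Theorem~\ref{wishlist} to ``compute $\im(\Psi)$ prime by prime'' and then check, ``at each bad prime $q$'', that the system is solvable ``precisely when the $q$-part of $h$ avoids the $q$-part of $E_\square\cup E_T\cup E_{2T}\cup E_{3,T}$''. But the exclusion sets are \emph{not} products of local conditions: membership in $E_T$ requires $v_2(h)=0$ \emph{and} $v_p(h)\geqslant 1$ for every $p\mid |T|$, and $E_{3,T}$ mixes the primes $2$, $3$, and all $p\mid |T|/3$. There is no well-defined ``$q$-part of $E_T$'', and Theorem~\ref{wishlist}(ii) only separates the good primes from the bad ones as a block (via $\Psi_Q$), not the bad primes from one another. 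Routing the argument through the congruence machinery of Theorem~\ref{computable-image} would require first enlarging $F$ to absorb the entanglements, after which the Frobenius condition $\sigma|_F\in C$ carries the cross-prime information; you cannot simply set $F=\Q$ and treat each $q$ in isolation.

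The paper sidesteps this by using Theorem~\ref{thm-imf-sep} instead: since $n=1$ the group is trivially separated, so one fixes a single integer $Z=2^E\lcm(T,D)$ with respect to which the Kummer tower is maximal, and reduces to checking, for each divisor $z\mid Z$, whether the \emph{field equality} $\Q(\zeta_{qz},a^{1/qz})=\Q(\zeta_z,a^{1/z})$ holds for $q=2$ or $q=3$ (for $q>3$ the cyclotomic part already moves). This criterion depends on the whole $z$, not just on $v_q(z)$, so the multi-prime nature of $E_T$, $E_{2T}$, $E_{3,T}$ is handled transparently: e.g.\ $E_T$ arises exactly as the set of odd $z$ with $\sqrt{a}\in\Q(\zeta_z)$, which forces $T\mid z$. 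The five-case formula for $m$ then falls out of a short computation of when $\zeta_3\in\Q(\zeta_z,a^{1/2^v})$. This field-equality reduction is the missing idea in your proposal; once you have it, the ``careful bookkeeping'' you anticipate collapses to a page of direct checks at $v_2(z)\in\{0,1,2\}$ and $v_3(z)=0$.
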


Notice that the image does not change by excluding a set of primes of density $0$ from the given index map.

\begin{remark}
After this work was accepted to publication, the authors noticed that Theorem \ref{image-f-Q} has previously been given by Lenstra \cite[(8.9) to (8.13)]{lenstra77} (though a detailed proof has not been provided there).
\end{remark}

\begin{proof} 
Let $D$ be the largest odd integer such that $a\in (\mathbb Q^\times)^D$. We set $E:=\max(3\delta, d+2)$ and $Z:=2^{E}\lcm(T,D)$. 
The group $\langle a \rangle$ has separated Kummer extensions w.r.t.~$Z$: following \cite{perucca-sgobba-tronto, PST-Q} we have to consider the $\ell$-adic failure (we have $\alpha^{1/{\ell^{x+1}}}\notin \mathbb Q(\zeta_{\ell^{\infty}}, \alpha^{1/{\ell^{x}}})$ for $\ell\neq 2$, $x\geqslant {v_\ell(D)}$ and for $\ell=2$, $x\geqslant E$) and the $2$-adelic failure (the related cyclotomic field is $\mathbb Q(\zeta_{2^E|T|})$).

By Theorem~\ref{thm-imf-sep} we only have to determine the positive divisors $z$ of $Z$ that are values for the index map, namely such that $\Gal(\mathbb{Q}(\zeta_{Z}, a^{1/Z})/\mathbb{Q}(\zeta_{z}, a^{1/z}))$ contains an automorphism which for every prime divisor $q$ of $Z/z$ is not the identity on $\mathbb{Q}(\zeta_{qz}, a^{1/qz})$. Fix a positive divisor $z$ of $Z$ and call $v:=v_2(z)$ and $V:=v_3(z)$. For a prime $q>3$ dividing $Z/z$ we have $\zeta_{zq}\notin \mathbb{Q}(\zeta_{Z/q}, a^{q/Z})$ so we are left to check whether $\Gal(\mathbb{Q}(\zeta_{6z}, a^{1/6z})/\mathbb{Q}(\zeta_{z}, a^{1/z}))$ contains an automorphism which is not the identity on $\mathbb{Q}(\zeta_{2z}, a^{1/2z})$ if $v<E$ and it is not the identity on $\mathbb{Q}(\zeta_{3z}, a^{1/3z})$ if $V< v_3(\lcm(T,D))$.
Such an automorphism exists unless $v<E$ and $\mathbb{Q}(\zeta_{2z}, a^{1/2z})=\mathbb{Q}(\zeta_{z}, a^{1/z})$, or $V<v_3(\lcm(T,D))$ and $\mathbb{Q}(\zeta_{3z}, a^{1/3z})=\mathbb{Q}(\zeta_{z}, a^{1/z})$.

We first determine when $\mathbb{Q}(\zeta_{2z}, a^{1/2z}) = \mathbb{Q}(\zeta_z, a^{1/z})$ holds. We will show that this is the case precisely when we are in one of the cases indicated by $E_{\square}, E_{T}$ and $E_{2T}$. We necessarily have $v \leqslant 2$ because $\zeta_{2^{v+1}} \not\in \mathbb{Q}(\zeta_{2^vw})$ for $w$ odd and $\mathbb{Q}(\zeta_z, a^{1/z})\cap \mathbb{Q}(\zeta_{2^\infty})$ is contained in $\mathbb{Q}(\zeta_{2^{\max(v+1,3)}})$.

If $v = 0$, the condition becomes $\sqrt{a} \in \mathbb{Q}(\zeta_z)$: this holds when either $d \geqslant 1$ and $\epsilon = 0$, or $d = \delta = \epsilon = 0$ and $T \mid z$. These obstructions correspond to the exceptional sets $E_{\square}$ and $E_{T}$.

If $v = 1$, the condition becomes $\mathbb{Q}(\zeta_{4}, a^{1/4}) \subset \mathbb{Q}(\zeta_{w}, a^{1/2})$ where $w:= z/2$ is odd. The former field must be abelian over $\mathbb Q$ hence $d \geqslant 1$. So the latter field contains $\zeta_4$ only if $\epsilon = 1$. Finally, from $a^{1/4} \in \mathbb{Q}(\zeta_{4w})$ we obtain $d = 1$, $\delta = 1$, $T \mid z$. This results in the set $E_{2T}$.

If $v = 2$, the condition becomes $\mathbb{Q}(\zeta_{8}, a^{1/8}) \subset \mathbb{Q}(\zeta_{4w}, a^{1/4})$ where $w:= z/4$ is odd. 
If $\epsilon = 0$, to get $\zeta_8$ we need $d \leqslant 1$, and to get $a^{1/8}$ we need $d \geqslant 2$, a contradiction.
If $\epsilon = 1$ we similarly have $d \geqslant 2$ and hence $\zeta_{16} \in \mathbb{Q}(\zeta_{8w}, a^{1/8})$, contradicting $\zeta_{16} \not\in \mathbb{Q}(\zeta_{4w}, a^{1/4})$. Hence, no obstructions arise from $v = 2$.

We now determine when $\mathbb{Q}(\zeta_{3z}, a^{1/3z}) = \mathbb{Q}(\zeta_{z}, a^{1/z})$. This will result in the exceptional set $E_{3,T}$. By considering the largest abelian subextensions, we have $V = 0$, $a$ is a cube, and $\zeta_3 \in \mathbb{Q}(\zeta_z, a^{1/2^v})$. The last condition holds if and only if $v \geqslant d+1$, $3\mid T$, $(T/3) \mid z$ and $\zeta_3 \in L(a^{1/2^{d+1}})$, where $L$ is the maximal subextension of $\mathbb{Q}(\zeta_z)$ of exponent $2$. So we must have $L(\zeta_3) = L(a^{1/2^{d+1}}) = L(\zeta_{2^{d+2}}^{\epsilon}\sqrt{(-3) \cdot 2^{\delta}})$.

For $\epsilon = \delta = 0$, no additional conditions are necessary.
For $\epsilon = 0, \delta = 1$, we need $v \geqslant 3$ to ensure $L(\sqrt{-3}) = L(\sqrt{-6})$. For $\epsilon = 1, \delta = 0$ we similarly need $v \geqslant d+2$.

Finally, for $\epsilon = \delta = 1$ we need for $d = 1$ (respectively, $d\neq 1$) that $v \geqslant 2$ (respectively, $v \geqslant \max(d+2, 3)$). Indeed,  $L(\sqrt{-3}) = L(\zeta_{2^{d+2}}\sqrt{-6})$ holds if and only if $\zeta_{2^{d+2}}\sqrt{2} \in L$. We conclude because the smallest cyclotomic extension in which $\zeta_{2^{d+2}}\sqrt{2}$ lies is: $\mathbb{Q}(\zeta_8)$ for $d = 0$; $\mathbb{Q}(\zeta_4)$ for $d = 1$; $\mathbb{Q}(\zeta_{2^{d+2}})$ for $d \geqslant 2$.
\end{proof}

The strategy of the above proof should extend to a number field $K$ known very explicitly (e.g.\ a quadratic field), and we could replace $a$ by a finitely generated subgroup of $K^\times$.

\begin{example}
The index map $p \mapsto \Ind_p(2)$ is surjective by Theorem~\ref{image-f-Q}, and also by Theorem~\ref{thm-imf-sep}, as its image contains all positive divisors of $8$ (consider $p=3, 7, 113, 73$).

The image of the index map $p \mapsto \Ind_p(-100)$ consists of the positive integers not congruent to $10$ modulo ${20}$. These values must be excluded, as $-100$ is minus a square, thus $10 \mid \Ind_p(-100)$ implies $p \equiv 1 \pmod{20}$ and hence $20 \mid \Ind_p(-100)$ because $-100=(\sqrt{5}(1+\zeta_4))^4$. 

For $a=-3$ (respectively, $a=(-3)^3$) and $p\geqslant 5$ we have $3\mid \Ind_p(a)$ only if (respectively, if and only if) $2\mid \Ind_p(a)$ because $\mathbb Q(\zeta_2, a^{1/2})$ is contained in (respectively, equals) $\mathbb Q(\zeta_3, a^{1/3})$. 
\end{example}

\begin{remark}\label{remarkROOTS}
 Over $K$, fix some positive integer $m\mid \tau$ and consider the index map $\mathfrak p \mapsto \Ind_\p(\zeta_m)$ for $\mathfrak p\nmid mO_K$. Its image has density $0$ inside $\mathbb Z_{>0}$ because $\Ind_\p(\zeta_m)=(N\mathfrak p-1)/m$. For $K=\mathbb Q$ we have $\Ind_2(-1)=1$ and $\Ind_p(-1)=(p-1)/2$ for $p$ odd, while for $K=\mathbb Q(\zeta_4)$ we have $\Ind_{1+\zeta_4}(\zeta_4)=1$ and $\Ind_{\mathfrak p}(\zeta_4)$ is $(p-1)/2$ (respectively, $(p^2-1)/2$) if $\mathfrak p$ lies over a prime $p\equiv 1 \pmod 4$ (respectively, $p\equiv 3 \pmod 4$).
These examples show that the image of the index map for one or several finite groups is easy to describe, however the description is not explicit. Thus, if we remove from the Index Map Problem the assumption that the groups have positive rank, then we may not be able to compute the image of the index map with a finite procedure.
 \end{remark}

\section{Examples of groups with surjective index map}\label{surjective}

\begin{proposition} \label{infinitelymany}
If $K \neq \mathbb{Q}$, then there exists a sequence $(\alpha_i)_{i \in \mathbb{Z}_{> 0}}$ with $\alpha_i \in K^{\times}$ for all $i > 0$ which are algebraic integers and not units, whose norms $N(\alpha_i)$ are pairwise coprime, and such that for all positive integers $r,n$ we have
\begin{equation}\label{maxdeg}
[K(\zeta_\infty, \alpha_1^{1/n}, \ldots, \alpha_r^{1/n}): K(\zeta_\infty)]=n^r\,.
\end{equation}
Assuming GRH, for any distinct $i_1,\ldots, i_r$ the index map $\mathfrak{p} \to (\Ind_{\mathfrak{p}}(\alpha_{i_1}), \ldots , \Ind_{\mathfrak{p}}(\alpha_{i_r}))$ is surjective onto $\mathbb Z_{>0}^r$.
\end{proposition}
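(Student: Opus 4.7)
The surjectivity reduces quickly to the Kummer maximality \eqref{maxdeg}: standard Kummer theory upgrades \eqref{maxdeg} to $[K(\zeta_\infty, \alpha_{i_1}^{1/H_1}, \ldots, \alpha_{i_r}^{1/H_r}) : K(\zeta_\infty)] = \prod_j H_j$ for every $H_I \in \mathbb{Z}_{>0}^r$, so the rank-$1$ groups $W_{i_j} = \langle \alpha_{i_j}\rangle$ have separated Kummer extensions with respect to the trivial tuple $(1,\ldots,1)$ in the sense of Proposition~\ref{used}(i). Theorem~\ref{thm-imf-sep} with $F = K$ (trivial Frobenius condition) then gives $\otimes(1,\ldots,1) = \mathbb{Z}_{>0}^r \subseteq \im(\Psi)$, which is exactly surjectivity.

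For the construction of $(\alpha_i)$, I would proceed inductively. Let $\mathcal{L}$ be the finite set of rational primes dividing $|Cl(K)| \cdot \tau \cdot \Omega \cdot \mathrm{disc}(K)$. Suppose $\alpha_1,\ldots,\alpha_k$ have been constructed with $\alpha_i$ a generator of a principal degree-$1$ prime $\mathfrak{p}_i$ of $K$ above a rational prime $p_i \notin \mathcal{L}$, the $p_i$ distinct and satisfying $p_i \not\equiv 1 \pmod{\ell}$ for every $\ell \in \mathcal{L}$, and \eqref{maxdeg} holding. The key Chebotarev input, applied to the Galois extension $H(\zeta_{\prod_{\ell\in\mathcal{L}}\ell})/\mathbb{Q}$ with $H$ the Hilbert class field of $K$, gives positive density of rational primes that split completely in $H$ and satisfy $p \not\equiv 1 \pmod{\ell}$ for every $\ell \in \mathcal{L}$; this uses that $\mathbb{Q}(\zeta_\ell) \not\subset H$ for $\ell \in \mathcal{L}$, which holds since $\ell$ ramifies in $\mathbb{Q}(\zeta_\ell)/\mathbb{Q}$ but is unramified in $H/\mathbb{Q}$ because $\mathrm{disc}(K) \in \mathcal{L}$. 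Pick such a $p_{k+1}$, distinct from earlier $p_i$ and outside $\mathcal{L}$, and set $\alpha_{k+1}$ to be a generator of a degree-$1$ principal prime $\mathfrak{p}_{k+1}$ above $p_{k+1}$; this $\alpha_{k+1}$ is a non-unit algebraic integer with $N(\alpha_{k+1}) = \pm p_{k+1}$ coprime to previous norms.

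To verify \eqref{maxdeg} after adjoining $\alpha_{k+1}$, suppose $\alpha := \alpha_1^{a_1}\cdots\alpha_{k+1}^{a_{k+1}} = \beta^n$ in $K(\zeta_\infty)^\times$; by induction it suffices to show $n \mid a_{k+1}$. Working prime-by-prime on $\ell$ with $m := v_\ell(n)$ (using Proposition~\ref{justW}(ii) for $\ell$-adic independence of Kummer extensions), the cyclic Kummer subextension $K(\alpha^{1/\ell^m})/K$ lies inside $K(\zeta_\infty)/K$. Since $v_{\mathfrak{p}_{k+1}}(\alpha) = a_{k+1}$ by principality of $\mathfrak{p}_{k+1}$ and coprimality of norms, the Kummer ramification at $\mathfrak{p}_{k+1}$ has $\ell$-part $\ell^{m - v_\ell(a_{k+1})}$, dominated by the $\ell$-part of the ramification at $\mathfrak{p}_{k+1}$ in $K(\zeta_\infty)/K$. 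For $\ell \notin \{p_{k+1}\} \cup \{\text{prime divisors of } p_{k+1} - 1\}$ the latter is trivial, so $\ell^m \mid a_{k+1}$.

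The \textbf{main obstacle} is the remaining primes $\ell \in \{p_{k+1}\} \cup \{\text{prime divisors of } p_{k+1} - 1\}$, all outside $\mathcal{L}$ by our Chebotarev choice: for these, a direct ramification count leaves a bounded-in-$m$ defect. To close the gap I would invoke Proposition~\ref{gq} (Schinzel's theorem on abelian radical extensions), which constrains $\alpha$ to be (modulo $\ell^m$-th powers in $K$) a product of specific cyclotomic generators $g_q$ for primes $q \equiv 1 \pmod{\ell}$ together with elements of $\mathbb{Q}(\zeta_{\ell^m \Omega})$. Here the hypothesis $K \neq \mathbb{Q}$ is essential: because $p_{k+1}$ splits completely into $[K:\mathbb{Q}] \geqslant 2$ distinct primes of $K$, any cyclotomic contribution $g_{p_{k+1}}$ distributes its valuation Galois-symmetrically across all these conjugate primes, whereas $\alpha_{k+1}^{a_{k+1}}$ concentrates only on $\mathfrak{p}_{k+1}$; the resulting asymmetry forces the $g_{p_{k+1}}$-exponent to be divisible by $\ell^m$, and then $\ell^m \mid a_{k+1}$.
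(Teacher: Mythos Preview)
The reduction of surjectivity to \eqref{maxdeg} via Theorem~\ref{thm-imf-sep} matches the paper. Your construction, however, differs from the paper's and contains a real gap in the case analysis.

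\textbf{The gap.} Your Chebotarev condition is unsatisfiable as written: since $-1 \in K^\times$ we always have $2 \mid \tau$, hence $2 \in \mathcal{L}$, and the requirement $p \not\equiv 1 \pmod{2}$ rules out every odd prime. Excluding $\ell = 2$ from the congruence conditions restores positive density, but then $\ell = 2$ always divides $p_{k+1} - 1$ while lying in $\mathcal{L}$, contradicting your claim that the obstacle primes are ``all outside $\mathcal{L}$.'' More seriously, invoking Proposition~\ref{gq} for primes $\ell \notin \mathcal{L}$ is misdirected: that proposition is stated under the hypothesis $\ell^e \mid \tau$, which forces $\ell \in \mathcal{L}$. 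In fact for $\ell \nmid \tau$ no such machinery is needed---if $\alpha^{1/\ell} \in K(\zeta_\infty)$ with $\zeta_\ell \notin K$, then $K(\alpha^{1/\ell})/K$ being abelian forces $\alpha \in (K^\times)^\ell$ directly, and the valuation at $\mathfrak{p}_{k+1}$ gives $\ell \mid a_{k+1}$. The genuine obstacle is precisely $\ell = 2$ (the odd $\ell \mid \tau$ being handled by your Chebotarev avoidance). For $\ell = 2$ your asymmetry idea does go through, since $g_{p_{k+1}}^2 \in \mathbb{Q}$ has equal valuation at all primes of $K$ above $p_{k+1}$, so the pattern $(a_{k+1}, 0, \ldots, 0)$ forces $2 \mid a_{k+1}$; but this works exactly because $2 \mid \tau$, not because $\ell \notin \mathcal{L}$. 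So the pieces are present but the logical structure as written does not hold together.

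\textbf{Comparison with the paper.} The paper does not use degree-$1$ prime generators. Instead, for each $\ell$ dividing $z := \mathrm{rad}(2\Omega)$ it builds (via Lemma~\ref{one-ell}) elements $\alpha_{\ell,i}$ whose valuation pattern above some prime $q \equiv 1 \pmod{\ell}$ is chosen by a counting argument to avoid every multiple of the pattern of $g_q^{\ell^{e_q}}$, and then sets $\alpha_i := \prod_{\ell \mid z} \alpha_{\ell,i}^{z/\ell}$. This treats all $\ell \mid z$ uniformly without any Chebotarev avoidance, at the cost of a more elaborate $\alpha_i$. Your route---once the case split is corrected to (a) $\ell \nmid \tau$: abelianity forces $\alpha \in (K^\times)^\ell$; (b) odd $\ell \mid \tau$: avoided by Chebotarev so the ramification count applies; (c) $\ell = 2$: the asymmetry argument via Proposition~\ref{gq}---would yield a valid and somewhat more elementary construction with $\alpha_i$ a single prime generator.
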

\begin{proof} By Theorem~\ref{thm-imf-sep} the second assertion follows from \eqref{maxdeg}, and it suffices to show \eqref{maxdeg} when $n=\ell$ is prime. 
Call $z$ the squarefree part of $2\Omega$. For all $\ell\mid z$ at once we construct by induction $\alpha_{\ell,i}$ as in Lemma~\ref{one-ell} and with pairwise coprime norms. The algebraic integers $\alpha_i:=\prod_{\ell\mid z} \alpha_{\ell, i}^{z/\ell}$ are not units and have coprime norms. Then \eqref{maxdeg} holds for $\ell\mid z$ because given integers $z_j$ not all divisible by $\ell$, no $\ell$-th root of $\prod_j \alpha_j^{z_j}$, equivalently of $\prod_j \alpha_{\ell,j}^{z_jz/\ell}$, is in $K(\zeta_\infty)$ because there are distinct primes $q_j$ such that there is no $t$ as in the statement of Lemma~\ref{one-ell} for $q:=q_j$ and $\alpha:=\alpha_{\ell, j}$.
It also holds for $\ell\nmid z$ by Proposition~\ref{gq}, Remark~\ref{Debry}, and \cite[Theorem 18]{debry-perucca}, as for every $i$ there is some prime $\mathfrak p$ of $K$ such that $\ell\nmid v_{\mathfrak p}(\alpha_i)$ and $\ell\mid v_{\mathfrak p}(\alpha_j)$ for all $j\neq i$.
\end{proof}

\begin{lemma}\label{one-ell}
Let $K \neq \mathbb{Q}$, fix some prime $\ell$ and $x>0$, and for each $q \equiv 1 \pmod{\ell}$ let $1 \leqslant e_q \leqslant v_\ell(q-1)$ be arbitrary and let $g_q$ be as in Proposition~\ref{gq}. There exists an algebraic integer $\alpha\in K^\times$ which is not a unit such that the following holds: $\alpha^{1/\ell}\notin K(\zeta_\infty)$; for all primes $\mathfrak p$ of $K$ we have $v_{\mathfrak p}(\alpha)<\ell$; the norm $N(\alpha)$ is only divisible by primes $q>x$ not inert in $K$ such that $q\nmid \ell\Omega$, $q\equiv 1 \pmod \ell$; for some of these $q$ there is no $t\in \mathbb Z$ such that $v_{\mathfrak q}(\alpha)\equiv t v_{\mathfrak q}(g^{\ell^{e_q}}_q) \pmod \ell$ holds for all primes $\mathfrak q$  of $K$ over $q$.
\end{lemma}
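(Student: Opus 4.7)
The plan is to construct $\alpha$ as a generator of a principal ideal $\mathfrak{q}_j\mathfrak{p}$, where $\mathfrak{q}_j$ is one of $s\geqslant 2$ primes of $K$ above a carefully chosen rational prime $q$, and $\mathfrak{p}$ is an auxiliary prime of $K$ whose ideal class cancels $[\mathfrak{q}_j]$ in $\mathrm{Cl}(K)$. First I select $q>x$ prime with $q\equiv 1\pmod{\ell}$, $q\nmid\ell\Omega\cdot\mathrm{disc}(K)$, and $q$ not inert in $K$: since $K\neq\mathbb{Q}$, the density of primes not inert in $K$ is positive (Chebotarev applied to the Galois closure of $K$), and intersecting with the density-$1/(\ell-1)$ condition $q\equiv 1\pmod{\ell}$ stays positive. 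Then $qO_K=\mathfrak{q}_1\cdots\mathfrak{q}_s$ with $s\geqslant 2$. Set $V_i:=v_{\mathfrak{q}_i}(g_q^\ell)\bmod\ell\in\mathbb{F}_\ell$ (extending the valuation from $K$ to $K(\zeta_\ell)$ if needed, which is unramified at $\mathfrak{q}_i$ since $q\nmid\ell$) and $L':=\mathbb{F}_\ell\cdot(V_1,\ldots,V_s)\subset\mathbb{F}_\ell^s$. Since $v_{\mathfrak{q}_i}(g_q^{\ell^{e_q}})\equiv\ell^{e_q-1}V_i\pmod{\ell}$, the ``no $t$'' tuple from the lemma statement lies in $L'$, so avoiding $L'$ suffices. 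Because $s\geqslant 2$, I pick $j$ such that the standard basis vector $\mathbf{e}_j\notin L'$: if $(V_i)=0$ any $j$ works; if some $V_{i'}=0$ take $j=i'$; if all $V_i\neq 0$ any $j$ works since $\mathbf{e}_j$ has a zero coordinate where $V\neq 0$. Set $a_i:=\delta_{ij}$.

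Next, by Chebotarev applied to the compositum of $H$ (the Hilbert class field of $K$), $K(\zeta_\ell)$, and the Galois closure of $K/\mathbb{Q}$, I find a prime $\mathfrak{p}$ of $K$ above a rational prime $p>x$ with $p\equiv 1\pmod{\ell}$, $p\nmid\ell\Omega$, $p\neq q$, $p$ not inert in $K$, and $[\mathfrak{p}]=-[\mathfrak{q}_j]\in\mathrm{Cl}(K)$. Compatibility on $H\cap K(\zeta_\ell)$ is automatic: since $q\equiv 1\pmod{\ell}$, each $\mathfrak{q}_i$ splits completely in $K(\zeta_\ell)/K$, so $[\mathfrak{q}_j]=\mathrm{Frob}_{\mathfrak{q}_j,H}$ restricts trivially to $H\cap K(\zeta_\ell)\subset K(\zeta_\ell)$, matching the trivial restriction of $\mathrm{Frob}_\mathfrak{p}$ to $K(\zeta_\ell)$. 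Let $\alpha\in O_K$ generate the principal ideal $\mathfrak{q}_j\mathfrak{p}$. Then $\alpha$ is an algebraic integer, not a unit (as $\alpha\in\mathfrak{q}_j$), with $v_\mathfrak{r}(\alpha)\in\{0,1\}<\ell$ at all primes, $N(\alpha)=N(\mathfrak{q}_j)N(\mathfrak{p})$ divisible only by the good primes $q$ and $p$, and the ``no $t$'' condition of the statement holds by construction.

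Finally, for $\alpha^{1/\ell}\notin K(\zeta_\infty)$: if $\alpha\in(K^\times)^\ell$ then $v_{\mathfrak{q}_j}(\alpha)=1$ would be divisible by $\ell$, contradiction; otherwise $K(\alpha^{1/\ell})/K$ is abelian of degree $\ell$ inside $K(\zeta_\infty)$, and Proposition~\ref{gq} (applied after passing to $K(\zeta_\ell)$ if $\zeta_\ell\notin K$) produces $\alpha=w^\ell(\gamma^\ell)^k$ with $w\in K(\zeta_\ell)^\times$, $k$ coprime to $\ell$, and $\gamma=\prod_i\gamma_i^{y_i}$ such that each $\gamma_i^\ell$ equals $g_{q'}^\ell$ for some prime $q'$ (plus factors attached to $\ell,\Omega$). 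Taking $\mathfrak{q}_i$-valuations at primes above $q$, only the factor with $q'=q$ contributes, yielding $(v_{\mathfrak{q}_i}(\alpha))_i\bmod\ell\in L'$ and contradicting $\mathbf{e}_j\notin L'$. The main obstacle is this final deduction, which requires a careful application of the decomposition of Proposition~\ref{gq} to isolate the $g_q$-factor and, when $\zeta_\ell\notin K$, to transfer the valuation relation back from $K(\zeta_\ell)$ to $K$ via the ramification index of $K(\zeta_\ell)/K$ at primes above $q$ (which divides $\ell-1$ and so is coprime to $\ell$, preserving divisibility modulo $\ell$).
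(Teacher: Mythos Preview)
Your proposal is correct and takes a genuinely different route from the paper's proof.

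The paper does not introduce an auxiliary prime at all. Instead, it runs a pigeonhole argument on the class group: for \emph{each} admissible $q$ it picks, by counting, a tuple $(z_j)_{j}$ with $0\le z_j<\ell$ lying outside the line $\{\,t\cdot(v_{\mathfrak q_j}(g_q^{\ell^{e_q}}))_j \bmod \ell : t\in\Z\,\}$ (possible because that line has at most $\ell$ points while $\mathbb F_\ell^{s}$ has $\ell^{s}\ge\ell^{2}$), sets $I_q=\prod_j \mathfrak q_j^{z_j}$, and then uses the finiteness of $\mathrm{Cl}(K)$ to find a finite set $S$ of such $q$ with $\prod_{q\in S} I_q$ principal; a generator is $\alpha$. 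So the paper's $\alpha$ is supported on possibly many primes $q$, whereas yours is supported on exactly two prime ideals $\mathfrak q_j$ and $\mathfrak p$.

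What each approach buys: the paper's argument is more elementary in that it avoids the Chebotarev compatibility check between $H$ and $K(\zeta_\ell)$ (only finiteness of the class group is used), and it never needs to single out a standard basis vector. Your approach gives a more explicit and economical $\alpha$ (only two prime ideals in its support), at the cost of the compatibility verification, which you handle correctly: since $q\equiv 1\pmod\ell$ gives $N\mathfrak q_j\equiv 1\pmod\ell$, the class $[\mathfrak q_j]$ restricts trivially to $H\cap K(\zeta_\ell)$. Two small sharpenings: you do not need the Galois closure of $K/\Q$ in the Chebotarev step---restricting to degree-$1$ primes $\mathfrak p$ of $K$ (a density-$1$ condition) already forces $p$ not inert in $K$; and the ramification index of $K(\zeta_\ell)/K$ at primes above $q$ is in fact $1$ (not merely coprime to $\ell$), since that extension ramifies only above $\ell$.

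For the ``main obstacle'' you flag, your sketch is essentially the same as the paper's (which simply writes ``by Proposition~\ref{gq}''). The point is that if $\alpha^{1/\ell}\in K(\zeta_\infty)$ then, after passing to $K(\zeta_\ell)$ if necessary, Kummer theory gives $\alpha=w^\ell(\gamma^\ell)^k$ with $\gamma$ as in Proposition~\ref{gq}; reducing the $\mathfrak q_i$-valuations modulo $\ell$ and using Proposition~\ref{gq}(ii) to kill all contributions except the one attached to $q$, one lands in the line you called $L'$, contradicting $\mathbf e_j\notin L'$. Note that the exponent arising from Proposition~\ref{gq} is $\varepsilon_q$ rather than $e_q$, but since $(v_{\mathfrak q_i}(g_q^{\ell^{\varepsilon_q}}))_i\bmod\ell$ lies in $L'$ just as $(v_{\mathfrak q_i}(g_q^{\ell^{e_q}}))_i$ does, your choice of $L'$ covers both.
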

\begin{proof}
There are infinitely many $q>x$ not inert in $K$ such that $q\nmid \ell\Omega$, $q\equiv 1 \pmod \ell$. Considering the primes $\mathfrak q_j$ over $q$, by a counting argument there is an ideal $I_q:=\prod_j \mathfrak{q}_j^{z_j}$ such that  $0\leqslant z_j<\ell$ and for no $t\in \mathbb Z$ we have $z_j\equiv v_{\mathfrak{q}_j}(g^{t\ell^{e_q}}_q)\pmod \ell$ for all $j$. For some set $S$ the ideal $\prod_{q\in S} I_q$ is principal: calling $\alpha$ a generator, 
for $q\in S$ we have $v_{\mathfrak{q}_j}(\alpha)=z_j$ hence $\alpha^{1/\ell}\notin K(\zeta_\infty)$ by  Proposition~\ref{gq}.
\end{proof}

\begin{example}\label{Gaussian}
If $K=\mathbb Q(\alpha_1,\ldots, \alpha_r)$ and $\alpha_i^2$ are distinct rational primes greater than $3$, or if $K = \mathbb{Q}(i)$ and $\alpha_i$ are Gaussian primes whose norms are distinct primes (see Remark~\ref{Debry}), then \eqref{maxdeg} holds for every $n$.
\end{example}

\begin{remark}
We expect \eqref{maxdeg} to hold for generic elements $\alpha_1,\ldots, \alpha_r\in K^\times$. Firstly, we expect that the $\alpha_i$'s are strongly $\ell$-independent for every prime $\ell$ and strongly $2$-independent over $K(\zeta_4)$ (see \cite{debry-perucca}) hence $[K(\zeta_{\ell^\infty}, \alpha_1^{1/\ell^n}, \ldots, \alpha_r^{1/\ell^n}): K(\zeta_{\ell^\infty})]=\ell^{nr}$.
We are left to check that for $\ell\mid \tau$ and $\alpha:=\prod_i \alpha_i^{z_i}$ (where $0\leqslant z_i<\ell$ and the $z_i$'s are not all zero) we have $\alpha^{1/\ell} \notin K(\zeta_{\infty})$. 
If we had $\alpha^{1/\ell} \in K(\zeta_{\infty})$, then $\ell\mid v_\mathfrak p(\alpha)$ for all primes $\mathfrak p$ of $K$ over $p\nmid \tau$ and over $p\not \equiv 1 \pmod \ell$ (see Proposition~\ref{gq}), and there are few admissible values for the tuple $(v_{\mathfrak q}(\alpha))$, where $\mathfrak q$ varies over the primes of $K$ over any fixed $q \equiv 1 \pmod \ell$ not inert in $K$ such that $q\neq \ell\Omega$ (see the proof of Lemma~\ref{one-ell}).
Generically, the non-zero valuations of the $\alpha_i$'s are (with few exceptions) $1$ and correspond to primes of $K$ of degree $1$ lying over distinct primes $q$. For such $q$, if $z_i\neq 0$, then the tuple $(v_{\mathfrak{q}}(\alpha))$ for the primes $\mathfrak{q}$ over $q$ has one non-zero entry hence $\alpha^{1/\ell} \notin K(\zeta_{\infty})$ by \cite[Theorem 12]{HPST}.

 \end{remark}

\begin{example}
The index map is not surjective if $K=\mathbb Q$ and $n\geqslant 2$.
If $a,b\in \mathbb Q^\times$, then the index map $p\mapsto (\Ind_p(a), \Ind_p(b))$ is not surjective because, considering the fraction $a=\frac{n}{d}$, the condition $4 nd \mid \Ind_p(b)$ implies $p\equiv 1 \pmod{4|nd|}$ hence $2\mid \Ind_p(a)$. Similarly, if $W_1=\langle \frac{n_1}{d_1}, \ldots, \frac{n_r}{d_r}\rangle$, then we cannot have $2\nmid \Ind_p(W_1)$ and $4n_1d_1\cdots n_rd_r\mid \Ind_p(W_2)$.
\end{example}

\subsection*{Acknowledgements} We would like to thank Fritz Hörmann, Flavio Perissinotto, and Pietro Sgobba for helpful discussions. We also thank the two anonymous referee for their valuable work and their insights. The first author was supported by the Emil Aaltonen foundation and worked in the Finnish Centre of Excellence in Randomness and Structures (Academy of Finland grant no. 346307).

\bibliographystyle{plain}

\begin{thebibliography}{10}

\bibitem{debry-perucca}
C.~Debry and A.~Perucca.
\newblock Reductions of algebraic integers.
\newblock {\em J. Number Theory}, 167:259--283, 2016.

\bibitem{erdos-murty}
P.~Erd{\"o}s and M.~R. Murty.
\newblock On the order of a (mod p).
\newblock In {\em CRM Proceedings and Lecture Notes}, volume~19, pages 87--97,
  1999.

\bibitem{heath-brown}
D.~R. Heath-Brown.
\newblock {Artin's conjecture for primitive roots}.
\newblock {\em Q. J. Math.}, 37(1):27--38, 1986.

\bibitem{hindry-silverman}
M.~Hindry and J.~H. Silverman.
\newblock {\em Diophantine {G}eometry}.
\newblock Number 201 in Graduate Texts in Mathematics. Springer-Verlag, New
  York, 2000.

\bibitem{hooley}
C.~Hooley.
\newblock On {A}rtin's conjecture.
\newblock {\em J. Reine Angew. Math.}, 225:209--220, 1967.

\bibitem{HPST}
F.~H{\"o}rmann, A.~Perucca, P.~Sgobba, and S.~Tronto.
\newblock Explicit {K}ummer generators for cyclotomic extensions.
\newblock {\em JP J. Algebra, Number Theory Appl.}, 53(1):69--84, 2022.

\bibitem{jarviniemi}
O.~J{\"a}rviniemi.
\newblock Equality of orders of a set of integers modulo a prime.
\newblock {\em Proc. Amer. Math. Soc.}, 149(09):3651--3668, 2021.

\bibitem{Lag-Od}
J.~C. Lagarias and A.~M. Odlyzko.
\newblock Effective versions of the {C}hebotarev density theorem.
\newblock In {\em Algebraic number fields: L-functions and Galois properties
  (Proc. Sympos., Univ. Durham, Durham, 1975)}, pages 409--464. Academic Press,
  1977.

\bibitem{lenstra77}
H.~W. Lenstra, Jr.
\newblock On {A}rtin's conjecture and {E}uclid's algorithm in global fields.
\newblock {\em Invent. Math.}, 42:201--224, 1977.

\bibitem{matthews}
K.~Matthews.
\newblock A generalisation of {A}rtin's conjecture for primitive roots.
\newblock {\em Acta Arith.}, 29(2):113--146, 1976.

\bibitem{m-survey}
P.~Moree.
\newblock {A}rtin's primitive conjecture -- a survey.
\newblock {\em Integers}, 12(6):1305--1416, 2012.

\bibitem{ms}
P.~Moree and P.~Stevenhagen.
\newblock A two-variable {A}rtin conjecture.
\newblock {\em J. Number Theory}, 85(2):291--304, 2000.

\bibitem{mss}
M.~R. Murty, F.~S\'{e}guin, and C.~L. Stewart.
\newblock A lower bound for the two-variable {A}rtin conjecture and prime
  divisors of recurrence sequences.
\newblock {\em J. Number Theory}, 194:8--29, 2019.

\bibitem{pappalardi}
F.~Pappalardi.
\newblock On the {$r$}-rank {A}rtin conjecture.
\newblock {\em Math. Comp.}, 66(218):853--868, 1997.

\bibitem{perucca-sgobba}
A.~Perucca and P.~Sgobba.
\newblock Kummer theory for number fields and the reductions of algebraic
  numbers.
\newblock {\em Int. J. Number Theory}, 15(8):1617--1633, 2019.

\bibitem{PST-Q}
A.~Perucca, P.~Sgobba, and S.~Tronto.
\newblock Explicit {K}ummer theory for the rational numbers.
\newblock {\em Int. J. Number Theory}, 16(10):2213--2231, 2020.

\bibitem{perucca-sgobba-tronto}
A.~Perucca, P.~Sgobba, and S.~Tronto.
\newblock The degree of {K}ummer extensions of number fields.
\newblock {\em Int. J. Number Theory}, 17(5):1091--1110, 2021.


\bibitem{Schinzel}
A.~Schinzel.
\newblock Abelian binomials, power residues and exponential congruences.
\newblock {\em Acta Arith.}, 32(3):245--274, 1977.

\bibitem{serre}
J.-P. Serre.
\newblock Quelques applications du th{{\'e}}or{{\`e}}me de densit{{\'e}} de
  {C}hebotarev.
\newblock {\em Inst. Hautes {{\'E}}tudes Sci. Publ. Math.}, 54:323--401, 1981.

\bibitem{Wagstaff}
S.~Wagstaff.
\newblock Pseudoprimes and a generalization of {A}rtin's conjecture.
\newblock {\em Acta Arith.}, 2(41):141--150, 1982.

\bibitem{wojcik}
J.~W{{\'o}}jcik.
\newblock On a problem in algebraic number theory.
\newblock {\em Math. Proc. Cambridge Philos. Soc.}, 119(2):191--200, 1996.

\end{thebibliography}

{\tiny

\textbf{Conflict of Interest}

The authors have no conflicts of interest to disclose.

\textbf{Data Availability}

Data sharing not applicable to this article as no datasets were generated or analysed
during the current study.}

\end{document}